\newtheorem{thm}{Theorem}[section]
\newtheorem{lem}[thm]{Lemma}
\newtheorem{prop}[thm]{Proposition}
\def\square{\vbox{
      \hrule height 0.4pt
      \hbox{\vrule width 0.4pt height 5.5pt \kern 5.5pt \vrule width 0.4pt}
      \hrule height 0.4pt}}
\def\ch\mathrm{c h}
\newcommand{\Z}{\mathbb{Z}}
\let\la=\langle
\let\ra=\rangle
\numberwithin{equation}{section}
\begin{document}

\newcommand{\auths}[1]{\textrm{#1},}
\newcommand{\artTitle}[1]{\textsl{#1},}
\newcommand{\jTitle}[1]{\textrm{#1}}
\newcommand{\Vol}[1]{\textbf{#1}}
\newcommand{\Year}[1]{\textrm{(#1)}}
\newcommand{\Pages}[1]{\textrm{#1}}

\author{V.~G.Bardakov}
\address{Sobolev Institute of Mathematics, Novosibirsk 630090, Russia}
\email{bardakov@math.nsc.ru}

\author{R. Mikhailov}
\address{Steklov Mathematical Institute, Gubkina 8, 119991 Moscow, Russia}
\email{rmikhailov@mail.ru}

\author{V.~V.~Vershinin}
\address{D\'epartement des Sciences Math\'ematiques,
                                     Universit\'e Montpellier II,
Place Eug\`ene Bataillon,
34095 Montpellier cedex 5, France}
\email{ vershini@math.univ-montp2.fr}
\address{Sobolev Institute of Mathematics, Novosibirsk 630090,
Russia }
\email{ versh@math.nsc.ru}

\author{J. Wu}

\address{Department of Mathematics, National University of Singapore, 2 Science Drive 2
Singapore 117542} \email{matwuj@nus.edu.sg}
\urladdr{www.math.nus.edu.sg/\~{}matwujie}

\thanks{This work was started during the stay
of the four authors in Oberwolfach in the framework of the pro gramme RiP
during May 18th - June 7th, 2008. The authors would like to thank the Mathematical Institute of Oberwolfach for its hospitality.
The work was continued during the visit of the
 third author in the Institute for Mathematical Sciences of the National University of Singapore during 
December 4 - 19,  2008. He would like to express his gratitude to the
IMS
}

\thanks{The last author is partially supported by the Academic Research Fund
of the National University of Singapore R-146-000-101-112}

\begin{abstract}
In this article, we investigate various properties of the pure virtual braid group $PV_3$. From its canonical presentation, we obtain a free product decomposition of $PV_3$. As a consequence, we show that $PV_3$ is residually torsion free nilpotent, which implies that the set of finite type
invariants in the sense of Goussarov-Polyak-Viro is complete for virtual pure braids with three strands. Moreover we prove that the presentation of $PV_3$ is aspherical. Finally we determine the cohomology ring and the associated graded Lie algebra of $PV_3$.
\end{abstract}

\title{On the pure virtual braid group $PV_3$}

\subjclass[2000]{Primary 20F38; Secondary 20F36, 57M}

\keywords{Pure virtual braid group, Lie algebra,  presentation}

\maketitle

\section{Introduction}

Virtual knots were introduced by L.~Kauffman \cite{Ka} and studied by many authors.
One of the  motivations lies in the
theory of Gauss diagrams and Gauss codes of knots \cite{Ka}, \cite{PV}.
Namely, for any
knot diagram it is possible to construct its Gauss diagram and form its Gauss
code.
The problem is that not every Gauss diagram (or Gauss code) corresponds
to some knot. To escape this difficulty virtual knots were introduced.
Strictly
virtual knots are those whose Gauss diagram is not a Gauss diagram of any
classical knot.
Many notions from the classical knot
theory are generalized to virtual knots, such as fundamental group, rack, quandle, Kauffman
and Jones polynomials.
  M.~Goussarov, M.~Polyak and O.~Viro \cite{GPV} proved that
the analogues of the upper and the lower presentations of the classical
fundamental group of a knot give two different groups for virtual knot.
Theorem of M.~Goussarov, M.~Polyak and O.~Viro \cite{GPV} says that two virtually equivalent classical knots are classically equivalent. This theorem means that inclusion of usual knots in the universe of virtual knots does not spoil the classical theory. On the other hand virtual knots are appeared to be
useful: combinatorial formulas for Vassiliev invariants  were obtained
by means of virtual knot theory.

Virtual braid groups $VB_n$ were introduced in \cite{Ka} and \cite{Ve}.
Seichi Kamada \cite{Kam} proved that any virtual link
can be described as the closure of a virtual braid, which is unique up to certain basic moves. This is analogous to the Alexander and Markov theorems for classical braids and links.
So, the same way as in classical case virtual braids can be used in the study of virtual links.

In the {\it virtual braid group} two types of crossings are
allowed: 1) as usual braids, or 2) as an
intersection of lines on the plane.
This group is given by the following set of generators:
$\{\zeta_i, \sigma_i, \ \ i=1,2, ..., n-1 \}$ and relations:
$$ \begin{cases} \zeta_i^2&=1, \\
\zeta_i \zeta_j &=\zeta_j \zeta_i, \ \ \text {if} \ \ |i-j| >1,\\
\zeta_i \zeta_{i+1} \zeta_i &= \zeta_{i+1} \zeta_i \zeta_{i+1}. \end{cases} $$
\centerline { The symmetric group relations }

$$ \begin{cases} \sigma_i \sigma_j &=\sigma_j \sigma_i, \ \text {if} \  |i-j| >1,
\\ \sigma_i \sigma_{i+1} \sigma_i &= \sigma_{i+1} \sigma_i \sigma_{i+1}.
\end{cases} 
$$

\centerline {The braid group relations }

$$ \begin{cases} \sigma_i \zeta_j &=\zeta_j \sigma_i, \ \text {if} \  |i-j| >1,\\
\zeta_i \zeta_{i+1} \sigma_i &= \sigma_{i+1} \zeta_i \zeta_{i+1}.
\end{cases} $$
\vglue 0.1cm
\centerline {The mixed relations }

The generator $\sigma_i$ corresponds to the canonical generator of the braid
group $Br_n$.
The generators $\zeta_i$ correspond to the intersection of lines.

As for the classical braid groups there exists the canonical
epimorphism to the symmetric group $VB_n\to \Sigma_n$ with the
kernel called the pure virtual braid group $PV_n$. So we have a
short exact sequence
\begin{equation*}
1 \to PV_n \to VB_n \to \Sigma_n \to 1.
\end{equation*}
Define the following elements in $PV_n$
$$
\lambda_{i,i+1} = \rho_i \, \sigma_i^{-1},~~~
\lambda_{i+1,i} = \rho_i \, \lambda_{i,i+1} \, \rho_i = \sigma_i^{-1} \, \rho_i,
~~~i=1, 2, \ldots, n-1,
$$
$$
\lambda_{ij} = \rho_{j-1} \, \rho_{j-2} \ldots \rho_{i+1} \, \lambda_{i,i+1} \, \rho_{i+1}
\ldots \rho_{j-2} \, \rho_{j-1},
$$
$$
\lambda_{ji} = \rho_{j-1} \, \rho_{j-2} \ldots \rho_{i+1} \, \lambda_{i+1,i} \, \rho_{i+1}
\ldots \rho_{j-2} \, \rho_{j-1}, ~~~1 \leq i < j-1 \leq n-1.
$$
These elements belong to $VP_n$.
It is shown in \cite{B} that the group $PV_n\ (n\geq 3)$ admits a
presentation with the  generators $\lambda_{ij},\ 1\leq i\neq j\leq n,$
and the following relations: \begin{align}
& \lambda_{ij}\lambda_{kl}=\lambda_{kl}\lambda_{ij}\\
&
\lambda_{ki}\lambda_{kj}\lambda_{ij}=\lambda_{ij}\lambda_{kj}\lambda_{ki}\label{relation},
\end{align}
where distinct letters stand for distinct indices.

Like the usual pure braid groups, groups $PV_n$ admit a
semi-direct product decompositions \cite{B}: for $n\geq 2,$ the
$n$-th virtual pure braid group can be decomposed as
$$
PV_n=V_{n-1}^*\rtimes PV_{n-1},
$$
where $V_{n-1}^*$ is a free subgroup of $PV_{n-1}$.

As it happens in Mathematics the virtual braid groups appeared
in another context under the other name  ``$n$-th quasitriangular
group $\mathbf{QTr}_n$ " in the work of
 L. Bartholdi, B. Enriquez, P. Etingof and E. Rain
\cite{BEER} as groups associated
to the Yang-Baxter equations.
Really consider the equation (1.2) 
for $k=1$, $i=2$, $j=3$, we get the quantum Yang-Baxter equation
 \begin{equation*}
\lambda_{12}\lambda_{13}\lambda_{23}=\lambda_{23}\lambda_{13}\lambda_{12}.
\end{equation*}
Also the authors of introduce
quadratic Lie algebras ${\mathfrak{qtr}}_n$
and map them onto the associated graded algebras
of the Malcev Lie algebras of the groups
${\mathbf{QTr}}_n$.
Among the other results of \cite{BEER}, the complete
description of integral homology groups of $PV_n$ is given:
the homology group
$H_r(PV_n,\mathbb Z)$ is free abelian of rank equal to
$\binom{n-1}{r}\frac{n!}{(n-r)!}$
(the number of unordered
partitions of $n$ in $r$ ordered parts).

\section{Residually Nilpotence of $PV_3$}
The group $PV_3$ admits the following presentation:
\begin{multline}\label{asphpres}\langle \lambda_{12},
\lambda_{21},\lambda_{13},\lambda_{31},\lambda_{23},\lambda_{32}\
|\ \lambda_{12} \lambda_{13}
\lambda_{23}\lambda_{12}^{-1}\lambda_{13}^{-1}\lambda_{23}^{-1}
=1\\ \lambda_{21} \lambda_{23}
\lambda_{13}\lambda_{21}^{-1}\lambda_{23}^{-1}\lambda_{13}^{-1} =
1,\
 \lambda_{13} \lambda_{12} \lambda_{32}\lambda_{13}^{-1}\lambda_{12}^{-1}\lambda_{32}^{-1}=1,\\ \lambda_{31} \lambda_{32}
\lambda_{12}\lambda_{31}^{-1}\lambda_{32}^{-1}\lambda_{12}^{-1} =
1,\
 \lambda_{23} \lambda_{21} \lambda_{31} \lambda_{23}^{-1}\lambda_{21}^{-1}\lambda_{31}^{-1}=1,\\ \lambda_{32} \lambda_{31}
\lambda_{21}
\lambda_{32}^{-1}\lambda_{31}^{-1}\lambda_{21}^{-1}=1\rangle.
\end{multline}

Recall that
\begin{equation}\label{semi}
PV_3 = V_2^* \leftthreetimes V_1,
\end{equation}
where $V_1 = \langle \lambda_{12}, \lambda_{21} \rangle$ is a free
2-generated group and $V_2^*$ is the normal closure of group $V_2
= \langle \lambda_{13}, \lambda_{23},  \lambda_{31}, \lambda_{32}
\rangle$ in $PV_3$. The group $V_2$ is a free  of rank 4.

\begin{prop}\label{decomp}
There exists a group $G_3$, such that there is the following free
product decomposition: $PV_3\cong G_3\ast \Z$.
\end{prop}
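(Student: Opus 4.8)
The plan is to massage the aspherical presentation (\ref{asphpres}) by Tietze transformations into an equivalent presentation in which one generator occurs in \emph{none} of the six defining relators; once that is achieved, that generator spans a free $\Z$-factor and the subgroup generated by the remaining generators, carrying the same relators, is the desired $G_3$. The obstruction to doing this naively is that all six relators are \emph{balanced} (each generator occurs with total exponent zero in each relation), so no generator can be Tietze-eliminated directly. The whole game is therefore to find the right change of generating set.

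The first step I would take is to rewrite each defining relation as a commutator. Starting from $\lambda_{ki}\lambda_{kj}\lambda_{ij}=\lambda_{ij}\lambda_{kj}\lambda_{ki}$ and cancelling a common $\lambda_{kj}$ on the right, one sees this relation is equivalent to $[\lambda_{ki}\lambda_{kj},\,\lambda_{ij}\lambda_{kj}]=1$. Thus (\ref{asphpres}) becomes the presentation on $\lambda_{12},\dots,\lambda_{32}$ with the six commutator relators $[\lambda_{12}\lambda_{13},\lambda_{23}\lambda_{13}]$, $[\lambda_{21}\lambda_{23},\lambda_{13}\lambda_{23}]$, $[\lambda_{13}\lambda_{12},\lambda_{32}\lambda_{12}]$, $[\lambda_{31}\lambda_{32},\lambda_{12}\lambda_{32}]$, $[\lambda_{23}\lambda_{21},\lambda_{31}\lambda_{21}]$, $[\lambda_{32}\lambda_{31},\lambda_{21}\lambda_{31}]$. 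This reformulation is what makes one generator absorbable.

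Next I would introduce the three ``column products'' $w_1=\lambda_{13}\lambda_{23}$, $w_2=\lambda_{12}\lambda_{32}$, $w_3=\lambda_{21}\lambda_{31}$ as new generators replacing $\lambda_{23},\lambda_{32},\lambda_{31}$, and then $m=\lambda_{13}\lambda_{12}$, $n=\lambda_{21}\lambda_{13}^{-1}$ replacing $\lambda_{12},\lambda_{21}$. Substituting into the six commutators and repeatedly using the identity $[XY,Y]=[X,Y]$ to discard trailing factors, the relators collapse to $[m,w_1]$, $[n,w_1]$, together with four relators in which the only surviving occurrences of $\lambda_{13}$ are as conjugating elements. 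A final Tietze move replacing $w_2$ by $w_2'=\lambda_{13}\,w_2\,\lambda_{13}^{-1}$ then clears $\lambda_{13}$ out of everything, leaving the six relators
\[
[m,w_1],\quad [n,w_1],\quad [m,w_2'],\quad [n^{-1}w_3m^{-1},w_2'],\quad [nw_1,w_3],\quad [m^{-1}w_2'n^{-1},w_3],
\]
none of which involves $\lambda_{13}$. Since the change of generators $\{m,n,\lambda_{13},w_1,w_2',w_3\}$ is invertible, $\lambda_{13}$ spans a free factor and $PV_3\cong G_3\ast\langle\lambda_{13}\rangle\cong G_3\ast\Z$, where $G_3=\langle m,n,w_1,w_2',w_3\mid [m,w_1],[n,w_1],[m,w_2'],[n^{-1}w_3m^{-1},w_2'],[nw_1,w_3],[m^{-1}w_2'n^{-1},w_3]\rangle$.

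I expect the genuine difficulty to lie entirely in \emph{discovering} this chain of substitutions: the $S_3$-symmetry of (\ref{asphpres}) disguises the fact that any single generator can be freed, and each relation couples two of the three ``blocks,'' so the decoupling has to be done in the right order. The commutator rewriting is the decisive observation, and because every move above is reversible it yields an honest Tietze equivalence rather than merely a quotient. As consistency checks I would note that the six relators of $G_3$ are commutators, whence $G_3^{\mathrm{ab}}=\Z^5$ and $PV_3^{\mathrm{ab}}=\Z^5\oplus\Z=\Z^6$, matching $H_1(PV_3)$; and that the Euler characteristics agree, $\chi(G_3)=1-5+6=2$ while $\chi(PV_3)=\chi(G_3)-1=1=1-6+6$.
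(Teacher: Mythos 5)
Your proof is correct and follows essentially the same route as the paper: a Tietze change of generators (the paper uses $a_1=\lambda_{13}\lambda_{23}$, $b_1=\lambda_{13}\lambda_{12}$, $a_2=\lambda_{32}\lambda_{31}$, $b_2=\lambda_{21}\lambda_{31}$, $c_1=\lambda_{13}\lambda_{31}$, $c_2=\lambda_{13}$) after which every defining relator can be written without the generator $\lambda_{13}$, so that $\lambda_{13}$ spans a free $\Z$-factor. Your generating set differs from the paper's only by harmless recombinations (your $m=b_1$, $w_1=a_1$, $w_3=b_2$, $n=b_2c_1^{-1}$, $w_2'=b_1a_2c_1^{-1}$), and it generates the same complementary subgroup $G_3$ of $PV_3$.
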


\begin{proof}
We can represent generators $\lambda_{ij}$ as virtual braids (see
\cite{B}) and use the operations of doubling of string we
construct new generators of $PV_3$ from generators of $V_1$.

We have
$$
a_1 =
\lambda_{13} \lambda_{23},~~~b_1 =
\lambda_{13} \lambda_{12},
$$
$$
b_2 =
\lambda_{21} \lambda_{31},~~~a_2 =
\lambda_{32} \lambda_{31},
$$
and it is easy to check that
$$
PV_3 = \langle a_1, a_2, b_1, b_2, \lambda_{13}, \lambda_{31}
\rangle.
$$

We can write the  old generators as words in new generators:
$$
\lambda_{12} = \lambda_{13}^{-1} b_1,~~~\lambda_{21} = b_2
\lambda_{31}^{-1},~~~ \lambda_{23} = \lambda_{13}^{-1}
a_1,~~~\lambda_{32} = a_2 \lambda_{31}^{-1}.
$$
The relations from the presentation (\ref{relation}) of $PV_3$  in
new generators have the following form
$$
a_1 b_1 = b_1 a_1,~~~b_1^{a_2} = b_1^{\lambda_{13} \lambda_{31}},
$$
$$
b_2^{a_1^{-1}} = b_2^{(\lambda_{13}
\lambda_{31})^{-1}},~~~a_1^{b_2} = a_1^{\lambda_{13}
\lambda_{31}},
$$
$$
a_2^{b_1^{-1}} = a_2^{(\lambda_{13} \lambda_{31})^{-1}},~~~a_2 b_2
= b_2 a_2,
$$
where $y^x \, = \,  x^{-1} \, y \, x.$

If we define $c_1 = \lambda_{13} \lambda_{31}$, $c_2 =
\lambda_{13},$ then
\begin{multline*}
PV_3 = \langle a_1, a_2, b_1, b_2, c_1, c_2\ |\ [a_1, b_1] = [a_2, b_2] = 1, \\
b_1^{c_1} = b_1^{a_2},~~~a_1^{c_1} = a_1^{b_2},~~~b_2^{c_1} =
b_2^{a_1 b_2},~~~a_2^{c_1} = a_2^{b_1 a_2} \rangle,
\end{multline*}
where $[x, y]$ as usual denotes the commutator of the elements $x$ and $y$:
$[x, y]= x^{-1}y^{-1}xy $.
Hence $PV_3 = G_3 * \langle c_2 \rangle$, where the group
$G_3$
can be presented as having generators $ a_1, a_2, b_1, b_2, c_1$
and the following relations
\begin{equation}
[a_1, b_1] = [a_2, b_2] = 1, 
\end{equation}
\begin{equation}
b_1^{c_1} = b_1^{a_2},~~~a_1^{c_1} = a_1^{b_2},~~~b_2^{c_1} =
b_2^{a_1 b_2},~~~a_2^{c_1} = a_2^{b_1 a_2} \label{eq:relcong}
\end{equation}
 and $G_3 = Q_3 \leftthreetimes \langle c_1 \rangle,$ where $Q_3$ is a subgroup
of $G_3$ with the set of generators $a_1, b_1, a_2, b_2$ and with
infinite set of relations
\begin{equation}
[a_i, b_i]^{c_1^k} = 1,~~~i = 1, 2,~~~k \in \mathbb{Z},
\end{equation}
rewritten in generators $a_1, b_1, a_2, b_2$ using relations
(\ref{eq:relcong}).
\end{proof}

The main result of this section is the following:

\begin{thm}\label{resnilp}
The  pure virtual braid group $PV_3$ is residually torsion free
nilpotent.
\end{thm}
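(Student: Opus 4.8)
The plan is to derive the statement from the two decompositions in Proposition~\ref{decomp}, using two permanence properties of the class $\mathcal{R}$ of residually torsion-free nilpotent groups. The first is that $\mathcal{R}$ is closed under free products: free groups lie in $\mathcal{R}$ by Magnus' theorem, and the free-product statement is classical (it may be proved, for instance, via the Mal'cev completion or the free Lie algebra, since $\mathrm{gr}^{\mathbb{Q}}(A*B)$ is the coproduct of $\mathrm{gr}^{\mathbb{Q}}(A)$ and $\mathrm{gr}^{\mathbb{Q}}(B)$). Since $PV_3\cong G_3*\mathbb{Z}$ by Proposition~\ref{decomp} and the free factor $\mathbb{Z}$ is itself torsion-free nilpotent, the free-product closure reduces the theorem to proving $G_3\in\mathcal{R}$.

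For $G_3$ I would use the decomposition $G_3=Q_3\leftthreetimes\langle c_1\rangle$ together with a Falk--Randell type principle: if $A\in\mathcal{R}$ and the generator of an infinite cyclic group acts on $A$ trivially on the abelianization $A^{\mathrm{ab}}$, then $A\rtimes\mathbb{Z}\in\mathcal{R}$, because for such an almost direct product the associated graded Lie algebra splits and no torsion is introduced in the graded quotients. The hypothesis is immediate here: each defining relation in (\ref{eq:relcong}) has the form $x^{c_1}=x^{w}$ with $x\in\{a_1,a_2,b_1,b_2\}$ and $w\in Q_3$, so that $c_1^{-1}xc_1=x[x,w]\equiv x\pmod{[Q_3,Q_3]}$; hence $c_1$ acts trivially on $Q_3^{\mathrm{ab}}$ and $G_3$ is an almost direct product of $Q_3$ with $\langle c_1\rangle$. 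This reduces the whole theorem to showing $Q_3\in\mathcal{R}$.

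Establishing $Q_3\in\mathcal{R}$ is where I expect the real difficulty to lie, since $Q_3$ is presented only by the infinite family $[a_i,b_i]^{c_1^k}=1$ ($i=1,2$, $k\in\mathbb{Z}$), each relation being rewritten in $a_1,b_1,a_2,b_2$ by means of (\ref{eq:relcong}). I would try to recognize $Q_3$ as itself assembled — through iterated almost direct or free constructions compatible with the $c_1$-action — from the two free abelian subgroups $\langle a_1,b_1\rangle$ and $\langle a_2,b_2\rangle$, so that the same permanence principles apply one level further down. The crux is to verify that, after the rewriting, the relations are all of the trivial-on-homology commutator form demanded by the Falk--Randell criterion, and that the rational lower central series satisfies $\bigcap_n\sqrt{\gamma_n(Q_3)}=1$ with torsion-free quotients; keeping the infinite presentation under control well enough to guarantee this is the principal obstacle of the argument.
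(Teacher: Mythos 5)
Your outer layer coincides with the paper's: Proposition~\ref{decomp} gives $PV_3\cong G_3\ast\mathbb{Z}$, and Malcev's theorem \cite{M} on free products of residually torsion-free nilpotent groups reduces everything to showing $G_3\in\mathcal{R}$. Your Falk--Randell step is also sound as far as it goes: the relations (\ref{eq:relcong}) do show that $c_1$ acts on each generator of $Q_3$ by conjugation by an element of $Q_3$, hence trivially on $Q_3^{\mathrm{ab}}$, so $G_3$ is an almost direct product $Q_3\rtimes\langle c_1\rangle$. But the proof stops exactly where the content of the theorem lies: you never establish $Q_3\in\mathcal{R}$, and the sketch you offer (assembling $Q_3$ from $\langle a_1,b_1\rangle$ and $\langle a_2,b_2\rangle$ by iterated free or almost-direct constructions) is not carried out and has no evident implementation. $Q_3$ is given only by the infinite family of relations $[a_i,b_i]^{c_1^k}=1$ rewritten through (\ref{eq:relcong}); it is not a free product of the two copies of $\mathbb{Z}^2$, nor an obvious iterated almost-direct product of them. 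The paper's own Subsection~2.1 exists precisely to warn that semidirect decompositions --- even with free kernels --- need not preserve residual nilpotence, so a further unstructured descent gains nothing; in effect your reduction replaces $G_3$ by a group that is harder to understand, not easier.

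What the paper does instead, and what is missing from your proposal, is to go up rather than down: it embeds $G_3$ into the group $Cb_3$ of basis-conjugating automorphisms of the free group $F_3$ (the McCool group $P\Sigma_3$), via $\lambda_{ij}\mapsto\varepsilon_{ij}$. The Lemma in Subsection~2.2 exhibits $Cb_3$ as an HNN-extension with base $G_3$, stable letter $\gamma_2=\varepsilon_{13}$, and associated subgroups $A\cong B\cong F_3$ (both consisting of inner automorphisms of $F_3$); since the base group of an HNN-extension embeds, $G_3$ is a subgroup of $Cb_3$. By Andreadakis \cite{A}, $Cb_3$ is residually torsion-free nilpotent, and this property passes to subgroups, so $G_3\in\mathcal{R}$; Malcev's theorem then concludes. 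Any completion of your route would in the end need some such external input as well (for instance, $Q_3\in\mathcal{R}$ because $Q_3$ too sits inside $Cb_3$), at which point the intermediate Falk--Randell reduction becomes superfluous.
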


Before proof of this theorem let us make a short deviation and
observe that the semi-direct product decomposition (\ref{semi}) of
$PV_3$ does not imply the residual nilpotence of $PV_3$
immediately.

\subsection{Remarks of Non-residually Nilpotent Groups}

Let $G_1$ and $G_2$ be residually nilpotent groups and let $G =
G_1 \leftthreetimes G_2$ be a semi-direct product of these groups.
The following question naturally arises: when $G$  is residually
nilpotent? As was proved in \cite{FR1} if $G_2$ acts trivially on
the abelianization of $G_1$ then the answer is positive.

We consider the case when $G_1$ and $G_2$ are free. Let $G_2 =
\mathbb{Z}$ be an infinite cyclic group. Then the  group $G$ is
called {\it the mapping torus} and have received a great deal of
attention in recent years.

Let $G_{\varphi} = F_n \leftthreetimes_{\varphi} \mathbb{Z}$,
where $ {\varphi}$ is a homomorphism ${\varphi}: \mathbb{Z}\to
\mathrm{Aut}(F_n)$.
In
the case $n = 1$ it is easy to prove that $G_{\varphi}$ is
residually nilpotent. In this case $\mathrm{Aut}(F_1)$ contains
only two automorphisms: trivial and inversion: $\varphi (a) =
a^{-1},$ where $F_1 = \langle a \rangle$. In the first case
$G_{\varphi} = F_1 \times \mathbb{Z} = \mathbb{Z}^2$  is abelian
and hence is residually nilpotent. In the second case it easy
calculate that
$$
G_{\varphi}' = \gamma_2 G_{\varphi} = \{ a^{2k} ~|~ k \in
\mathbb{Z} \}
$$
and if $m > 2$ then
$$
\gamma_m G_{\varphi} = \{ a^{2^{m-1} k} ~|~ k \in \mathbb{Z} \}.
$$
Hence
$$
\bigcap_{i=1}^{\infty} \gamma_i G_{\varphi} = 1,
$$
and $G_{\varphi}$ is residually nilpotent.

If $G_{\varphi} = F_2 \leftthreetimes_{\varphi} \mathbb{Z}$ then
the following example shows that there exists an automorphism $\varphi$
for which our  group is not residually nilpotent.

\noindent{\bf Example} (see \cite{MP}{, Example 1.32}). Let $G =
F_{2} \leftthreetimes \langle t\rangle$, where $F_{2} = \langle a,
b \rangle$ and $t$ is the following automorphism of $F_{2}$:
$$
t : \left\{
\begin{array}{ccl}
a & \longrightarrow  & a^2 \, b, \\
b & \longrightarrow  & a \, b. \\
\end{array}
\right.
$$
From the relation
$$
t \, b \, t^{-1} = a \, b
$$
we have
$$
[t^{-1}, b^{-1}] = a.
$$
From the relation
$$
t \, a \, t^{-1} = a^2 \, b
$$
we have
$$
b = a^{-1} \, [a, t^{-1}] = [b^{-1}, t^{-1}] \, [a, t^{-1}].
$$
Hence in this example
$$
\bigcap_{i=1}^{\infty} \gamma_i G = F_2.
$$
and the group $G$ is not residually nilpotent.

We can generalize this example. Let us consider now the following
group
$$
G_{\varphi} = F_n \leftthreetimes \langle t \rangle,~~ n \geq 2,
$$
where conjugation by $t$ induces the automorphism $\varphi \in
\mathrm{Aut}(F_n).$ Let $A = [\varphi]$ be the abelianization of
$\varphi$ i.~e. $A \in \mathrm{GL}_n(\mathbb{Z}) \simeq
\mathrm{Aut(F_n/F_n')}$ is induced by $\varphi.$ Let $E$ be the
identity matrix from $\mathrm{GL}_n(\mathbb{Z})$.

\begin{prop}
If the matrix $A-E \in \mathrm{GL}_n(\mathbb{Z}),$ then
$\bigcap_{i=1}^{\infty} \gamma_i (G_{\varphi}) = F_n$ and
$G_{\varphi}$ is not residually nilpotent.
\end{prop}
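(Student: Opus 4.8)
The plan is to prove the sharper statement that $\gamma_i(G_\varphi) = F_n$ for \emph{every} $i \geq 2$; both assertions of the proposition follow at once, since then
\[
\bigcap_{i=1}^{\infty}\gamma_i(G_\varphi) = \gamma_1(G_\varphi)\cap\bigcap_{i\geq 2}\gamma_i(G_\varphi) = G_\varphi\cap F_n = F_n\neq 1,
\]
so the intersection of the lower central series is nontrivial and $G_\varphi$ is not residually nilpotent. First I would dispose of the easy inclusion: because $G_\varphi/F_n\cong\mathbb{Z}$ is abelian, $\gamma_2(G_\varphi)=[G_\varphi,G_\varphi]\subseteq F_n$, hence $\gamma_i(G_\varphi)\subseteq F_n$ for all $i\geq 2$. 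It remains to prove the reverse inclusion $F_n\subseteq\gamma_i(G_\varphi)$.

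The whole argument rests on the single identity $[F_n,G_\varphi]=F_n$. Granting it, an immediate induction finishes the job: $\gamma_1(G_\varphi)=G_\varphi\supseteq F_n$, and if $\gamma_i(G_\varphi)\supseteq F_n$ then
\[
\gamma_{i+1}(G_\varphi) = [\gamma_i(G_\varphi),G_\varphi]\supseteq [F_n,G_\varphi] = F_n .
\]
Combined with the previous paragraph this yields $\gamma_i(G_\varphi)=F_n$ for all $i\geq 2$, as desired. Note that the invertibility hypothesis is reused unchanged at every stage, rather than being consumed once.

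To establish $[F_n,G_\varphi]=F_n$, observe that $[F_n,G_\varphi]$ is a subgroup of $F_n$ (as $F_n$ is normal) which contains $F_n'=[F_n,F_n]$. A subgroup of $F_n$ that contains $F_n'$ and surjects onto the abelianization $F_n/F_n'\cong\mathbb{Z}^n$ must be all of $F_n$, so it suffices to check this surjectivity. Here I would use the commutators $[t^{-1},x^{-1}]=\varphi(x)x^{-1}$ for $x\in F_n$, which lie in $[F_n,G_\varphi]$: since conjugation by $t$ induces $A$ on $F_n/F_n'$, the class of $\varphi(x)x^{-1}$ is $(A-E)\bar{x}$. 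Thus the image of $[F_n,G_\varphi]$ in $\mathbb{Z}^n$ contains $(A-E)\mathbb{Z}^n$, and this equals all of $\mathbb{Z}^n$ precisely because $A-E\in\mathrm{GL}_n(\mathbb{Z})$ by hypothesis. This gives the surjectivity and hence $[F_n,G_\varphi]=F_n$.

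The point to watch — and the thing that makes the hypothesis exactly right — is the temptation to argue degree by degree through the associated graded Lie ring of $F_n$. There $t$ acts in degree $m$ by the induced Lie power $L_m(A)$, and $L_m(A)-E$ need \emph{not} be invertible over $\mathbb{Z}$ even when $A-E$ is: already in degree $2$ for $n=2$ the action is by $\det A$, which may equal $1$ (indeed $\det A=1$ for the matrix $\begin{pmatrix}2&1\\1&1\end{pmatrix}$ of the Example). The argument above sidesteps this entirely by working only at the abelianized level together with the inclusion $F_n'\subseteq[F_n,G_\varphi]$, which lets the single invertibility of $A-E$ drive the induction through the whole lower central series without ever invoking the higher Lie powers.
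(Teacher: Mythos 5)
Your proof is correct and follows essentially the same route as the paper: both arguments reduce everything to the observation that the commutators of the generators $x_i$ with $t$ project to $(A-E)\bar{x}_i$ in the abelianization of $F_n$, so invertibility of $A-E$ over $\mathbb{Z}$ forces every $x_i$ into the relevant commutator subgroup. Your version is a slightly cleaner packaging — isolating the identity $[F_n,G_\varphi]=F_n$ once and running an explicit induction — where the paper proves $\gamma_2(G_\varphi)=F_n$ by the same computation modulo $G_\varphi'$ and then handles the higher terms $\gamma_i(G_\varphi)=F_n$ with a ``similarly.''
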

\begin{proof} Let
$$
\varphi : \left\{
\begin{array}{l}
x_1 \longrightarrow x_1^{\alpha_{11}} \, x_2^{\alpha_{12}} \, \ldots \, x_n^{\alpha_{1n}} \, c_1, \\
x_2 \longrightarrow x_1^{\alpha_{21}} \, x_2^{\alpha_{22}} \, \ldots \, x_n^{\alpha_{2n}} \, c_2, \\
\ldots\\
x_n \longrightarrow x_1^{\alpha_{n1}} \, x_2^{\alpha_{n2}} \, \ldots \, x_n^{\alpha_{nn}} \, c_n, \\
\end{array}
\right.
$$
where $\alpha_{ij} \in \mathbb{Z}$, $c_i \in F_n'.$ Then
$[\varphi] = (\alpha_{ij})_{i,j =1}^n = A.$ Since $\varphi \in
\mathrm{Aut}(F_n)$ then $A \in \mathrm{GL}_n(\mathbb{Z}).$
Consider the commutator
$$
[x_i, \varphi] = x_i^{-1} (x_i)^{\varphi} = x_1^{\alpha_{i1}} \,
\ldots \, x_i^{\alpha_{ii} -1} \, \ldots \, x_n^{\alpha_{in}} \,
c'_i,~~ c'_i \in F_n',~~~i = 1, 2, \ldots, n.
$$
Hence we have the following system
$$
\left\{
\begin{array}{l}
x_1^{\alpha_{11}-1} \, x_2^{\alpha_{12}} \, \ldots \, x_n^{\alpha_{1n}} = d_1, \\
 x_1^{\alpha_{21}} \, x_2^{\alpha_{22}-1} \, \ldots \, x_n^{\alpha_{2n}} = d_2, \\
\ldots \\
 x_1^{\alpha_{n1}} \, x_2^{\alpha_{n2}} \, \ldots \, x_n^{\alpha_{nn}-1} = d_n, \\
\end{array}
\right.
$$
where  $d_i = [x_i, \varphi] (c_i')^{-1} \in G_{\varphi}'$. Since
$A - E \in \mathrm{GL}_n(\mathbb{Z})$ then from this system we
have that all $x_i$ lie in the commutator subgroup $G_{\varphi}'$
but $G_{\varphi}' \leq F_n$ Hence $G_{\varphi}' = F_n.$ Similarly
we can to prove that $\gamma_i G_{\varphi} = F_n$ for all $i > 1.$
\end{proof}

\subsection{Proof of Theorem \ref{resnilp}.}
 Let $F_n$ be a free group of rank $n\geq 2$ with a
free generator set $\{x_1, x_2,...,x_n\}$ and $Aut(F_n)$ the group
of automorphisms of $F_n$. Consider the subgroup of $Aut(F_n)$,
generated by automorphisms of the form
$$
\varepsilon_{ij} : \left\{
\begin{array}{ll}
x_{i} \longmapsto x_{j}^{-1}x_ix_j & \mbox{if }~~ i\neq j, \\
x_{l} \longmapsto x_{l} & \mbox{if }~~ l\neq i.
\end{array} \right.
$$
and  denote this subgroup by $Cb_n.$ This is the group {\it basis
conjugating automorphisms} of a free group. It is
torsion-free nilpotent \cite{A} and also
has topological interpretations. It is the
pure group of motions of $n$ unlinked circles in $S^3$ \cite{DG,
JMM}. On
the other hand it is also the pure braid-permutation group
\cite{CPVW}.
 This group is denoted
$P\varSigma_n$ in \cite{JMM} and in \cite{CPVW}. McCool gave the
following presentation for it \cite{MC}:
\begin{multline}
\langle \varepsilon_{ij},\ 1\leq i\neq j \leq n\ |\
\varepsilon_{ij}\varepsilon_{kl}=\varepsilon_{kl}\varepsilon_{ij},\\
\varepsilon_{ij}\varepsilon_{kj}=\varepsilon_{kj}\varepsilon_{ij},
\varepsilon_{ij}\varepsilon_{kj}\varepsilon_{ik}=\varepsilon_{ik}\varepsilon_{ij}
\varepsilon_{kj} \rangle
\end{multline}
where distinct letters stand for distinct indexes.

There exist a homomorphism
$$
\varphi : PV_3 \longrightarrow Cb_3, ~~~\varphi(\lambda_{ij}) =
\varepsilon_{ij},~~~1 \leq i \not= j \leq 3.
$$
We will denote the images of elements $a_i,$ $b_i,$ $c_i,$ by
$\alpha_i,$ $\beta_i,$ $\gamma_i,$ $i=1,2,$ respectively.

In $Cb_3$ hold the set of defining relations from $PV_3$ and
relations
$$
\varepsilon_{13} \varepsilon_{23} = \varepsilon_{23}
\varepsilon_{13},~~~ \varepsilon_{12} \varepsilon_{32} =
\varepsilon_{32} \varepsilon_{12},~~~ \varepsilon_{21}
\varepsilon_{31} = \varepsilon_{31} \varepsilon_{21}.
$$
Since
\begin{align*}
& \varepsilon_{13} = \gamma_2,~~~\varepsilon_{31} = \gamma_2^{-1}
\gamma_1,~~~\varepsilon_{12} = \gamma_2^{-1} \beta_1,\\
& \varepsilon_{21} = \beta_2 \gamma_1^{-1} \gamma_2,\
\varepsilon_{23} = \gamma_2^{-1} \alpha_1,\ \varepsilon_{32} =
\alpha_2 \gamma_1^{-1} \gamma_2,
\end{align*}
this set of defining relations has the form
$$
\alpha_1 = \gamma_2^{-1} \alpha_1 \gamma_2,~~~\gamma_2^{-1}
\beta_1 \alpha_2 \gamma_1^{-1} \gamma_2 = \alpha_2 \gamma_1^{-1}
\beta_1,~~~ \beta_2 = \gamma_2^{-1} \gamma_1 \beta_2 \gamma_1^{-1}
\gamma_2.
$$
Rewrite this set as follows:
$$
\alpha_1^{\gamma_2} = \alpha_1,~~~(\beta_1 \alpha_2
\gamma_1^{-1})^{\gamma_2} = \alpha_2 \gamma_1^{-1} \beta_1,~~~
(\gamma_1 \beta_2 \gamma_1^{-1})^{\gamma_2} = \beta_2.
$$
We have
\begin{lem}
$$
Cb_3 = \langle G_3, \gamma_2 \ |\  \gamma_2^{-1} \, A \, \gamma_2
= B, ~~\psi \rangle
$$
is an HNN-extension with associated subgroups
$$
A = \langle \alpha_1, \beta_1 \alpha_2 \gamma_1^{-1}, \gamma_1
\beta_2 \gamma_1^{-1} \rangle,~~~ B = \langle \alpha_1, \alpha_2
\gamma_1^{-1} \beta_1, \beta_2  \rangle,
$$
and the isomorphism $\psi : A \longrightarrow B$ is defined by the
rule
$$
\psi : \left\{
\begin{array}{lcl}
\alpha_1 & \longrightarrow  & \alpha_1, \\
\beta_1 \alpha_2 \gamma_1^{-1} & \longrightarrow  & \alpha_2 \gamma_1^{-1} \beta_1, \\
\gamma_1 \beta_2 \gamma_1^{-1} & \longrightarrow  & \beta_2. \\
\end{array}
\right.
$$
Subgroup $G_3$ is isomorphic to subgroup from $PV_3.$
\end{lem}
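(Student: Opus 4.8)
The plan is to recognize the presentation of $Cb_3$ recorded just above as the standard presentation of an HNN-extension and then to certify that it really is one. Since the $\varepsilon_{ij}$ generate $Cb_3$, the map $\varphi\colon PV_3\to Cb_3$ is onto, and by the computation preceding the Lemma $Cb_3$ is the quotient of $PV_3$ by the three relations $\varepsilon_{13}\varepsilon_{23}=\varepsilon_{23}\varepsilon_{13}$, $\varepsilon_{12}\varepsilon_{32}=\varepsilon_{32}\varepsilon_{12}$, $\varepsilon_{21}\varepsilon_{31}=\varepsilon_{31}\varepsilon_{21}$. Rewriting these through the substitutions for the $\varepsilon_{ij}$ gives exactly the three relations displayed above, that is $\alpha_1^{\gamma_2}=\alpha_1$, $(\beta_1\alpha_2\gamma_1^{-1})^{\gamma_2}=\alpha_2\gamma_1^{-1}\beta_1$ and $(\gamma_1\beta_2\gamma_1^{-1})^{\gamma_2}=\beta_2$, which are precisely $\gamma_2^{-1}A\gamma_2=B$ under the rule $\psi$ on the three displayed generators. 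As $PV_3=G_3\ast\langle c_2\rangle$ by Proposition \ref{decomp} and the image $\gamma_2$ of $c_2$ does not appear in the defining relations of $G_3$, the presentation of $Cb_3$ we obtain has exactly the shape $\langle G_3,\gamma_2\mid\gamma_2^{-1}A\gamma_2=B\rangle$ of an HNN-presentation with base $G_3$ and stable letter $\gamma_2$.

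It remains to see that this presentation is a genuine HNN-extension, i.e. that $\psi$ extends to an isomorphism of the subgroups $A=\langle\alpha_1,\beta_1\alpha_2\gamma_1^{-1},\gamma_1\beta_2\gamma_1^{-1}\rangle$ and $B=\langle\alpha_1,\alpha_2\gamma_1^{-1}\beta_1,\beta_2\rangle$ of $G_3$. I would prove that $A$ and $B$ are each free of rank $3$ on the displayed triple; then $\psi$, being a bijection of free bases, automatically extends to an isomorphism $A\to B$. To get freeness I would construct a homomorphism from $G_3$ onto a free group $F_3$ of rank $3$ sending the displayed triple bijectively to a free basis of $F_3$, and invoke the standard fact that a family sent bijectively to a free basis is itself a free basis of the subgroup it generates. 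Such a map has to be built from the defining relations of $G_3$ (the commutator relations $[a_1,b_1]=[a_2,b_2]=1$ and \eqref{eq:relcong}), and it is here that I expect the real difficulty: the $\langle c_1\rangle$-action in $G_3=Q_3\leftthreetimes\langle c_1\rangle$ is only given implicitly through \eqref{eq:relcong}, so exhibiting the two rank-$3$ free subgroups and checking independence of the three images is the main obstacle, everything else being rewriting and a citation.

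Finally, once $\psi$ is an isomorphism the displayed presentation is the presentation of the HNN-extension $G_3\ast_\psi$, so $Cb_3\cong G_3\ast_\psi$. By Britton's lemma the base of an HNN-extension embeds, hence $G_3$ injects into $Cb_3$; the embedded copy is the group presented by the relations of Proposition \ref{decomp}, namely the subgroup $G_3\le PV_3$, which is the final assertion of the Lemma.
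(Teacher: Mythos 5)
There is a genuine gap here, and you have located it yourself. Everything you do up to the freeness claim --- rewriting the three extra McCool relations in the generators $\alpha_1,\beta_1,\alpha_2,\beta_2,\gamma_1,\gamma_2$, recognizing the HNN shape of the resulting presentation over the base $G_3$ with stable letter $\gamma_2$, and reducing the lemma to the statement that the two displayed triples are free bases of $A$ and $B$ (so that $\psi$, a bijection of bases, extends to an isomorphism, and Britton's lemma then embeds the base) --- coincides with the structure of the paper's argument. But the one step you defer as ``the main obstacle'' is the entire content of the paper's proof of this lemma, and your sketch of how to attack it (a homomorphism defined on all of $G_3$ onto a free group $F_3$ carrying the triple to a free basis) is both unexecuted and likely not the right tool: you never produce such a map, it is not clear that one exists on all of $G_3$ (for instance, killing $c_1$ sends the triple for $A$ to elements of a right-angled Artin group in which two of them commute), and in fact nothing defined on all of $G_3$ is needed.

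The missing idea is to use the concrete realization $Cb_3\le \mathrm{Aut}(F_3)$ by basis-conjugating automorphisms and to rewrite the generators of $A$ and $B$ back in McCool's generators $\varepsilon_{ij}$. One finds $B=\langle \varepsilon_{13}\varepsilon_{23},\ \varepsilon_{32}\varepsilon_{12},\ \varepsilon_{21}\varepsilon_{31}\rangle$, and each of these products is an \emph{inner} automorphism of $F_3$: conjugation by $x_3$, by $x_2$ and by $x_1$ respectively. Since $F_3$ has trivial center, $\mathrm{Inn}(F_3)\cong F_3$, so $B$ is free of rank $3$ with the displayed triple as a free basis. Similarly $A=\langle \widehat{x_2},\,\widehat{x_3},\,\widehat{x_3x_1x_3^{-1}}\rangle$, where $\widehat{y}$ denotes conjugation by $y$; as $\{x_2,x_3,x_3x_1x_3^{-1}\}$ is a free basis of $F_3$, the subgroup $A$ is also free of rank $3$ on its displayed triple. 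Pulling this back along $G_3\to Cb_3$ (by exactly the standard fact you quote: a triple mapping bijectively onto a free basis freely generates the subgroup it generates) shows that $A,B\le G_3$ are free of rank $3$ on the triples, $\psi$ carries basis to basis and extends to an isomorphism $A\to B$, and the rest of your argument goes through. So your approach is the paper's approach in outline, but without the inner-automorphism observation the proof is missing its key step.
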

\begin{proof}
We have to prove that $A \simeq B.$ Find  the sets of generators
of $B$ and $A$ in old generators of $Cb_3.$ We have
$$
B = \langle \varepsilon_{13} \varepsilon_{23},~~\varepsilon_{32}
\varepsilon_{12},~~ \varepsilon_{21} \varepsilon_{31} \rangle
$$
and we see that $\varepsilon_{13} \varepsilon_{23}$ is an
automorphism of $F_3$ which is the conjugation by $x_3,$
$\varepsilon_{32} \varepsilon_{12}$ is an automorphism of $F_3$
which is the conjugation by $x_2,$ $\varepsilon_{21}
\varepsilon_{31}$ is an automorphism of $F_3$ which is the
conjugation by $x_1.$ Hence
$$
B = \langle \widehat{x_1}, \, \widehat{x_2}, \,\widehat{x_3}
\rangle \simeq F_3,
$$
where $\widehat{y}$ is an inner automorphism of $F_3$ which is
conjugation by $y$.

Similar,
$$
A = \langle \widehat{x_2}, \, \widehat{x_3}, \,\widehat{x_3 x_1
x_3^{-1}} \rangle \simeq F_3,
$$
and $A \simeq B.$
\end{proof}

The group $G_3$ is residually torsion free
nilpotent as a subgroup of residually torsion free nilpotent
group  $Cb_3$.  Recall the following result of Malcev from
\cite{M}: the free product of residually torsion-free nilpotent
groups is residually torsion-free nilpotent. Hence,
Proposition~\ref{decomp} and the fact that $G_3$ is residually
torsion free nilpotent imply the statement of Theorem
\ref{resnilp}.\ $\Box$

\subsection{Remark on finite type invariants} Recall the notion of
finite type invariants for virtual braids (see \cite{BB}). Let $A$
be an abelian group and $n\geq 2$. Consider a set map $v: VB_n\to
A$, i.e. an $A$-valued invariant of virtual $n$-braids. Let $J$ be
the two-sided ideal in the integral group ring $\mathbb Z[VB_n]$
generated by elements
$\{\sigma_i-\rho_i, \ \sigma_i^{-1}-\rho_i\ |\ i=1,\dots, n-1\}$.
The filtration of the group ring
$$
\mathbb Z[VB_n]\supset J\supset J^2\supset \dots
$$
is called {\it Goussarov-Polyak-Viro filtration}. We say that an
invariant $v: VB_n\to A$ is of degree $d$ if its linear extension
$\mathbb Z[VB_n]\to A$ vanishes on $J^{d+1}$.
As usual the augmentation ideal is defined by the formula
$\Delta(PV_n)=\mathrm{Ker}\{\mathbb
Z[PV_n]\to \mathbb Z\}$.
The
Goussarov-Polyak-Viro filtration for $VB_n$ corresponds to the
filtration by powers of augmentation ideal
of the group ring $\mathbb Z[PV_n]:$
$$
\mathbb Z[PV_n]\supset \Delta\supset \Delta^2\supset\dots.
$$
Theorem~\ref{resnilp} implies that  the intersection of
augmentation powers of the group ring $\mathbb Z[PV_3]$ is zero.
Hence, we have the following:
\begin{prop}
The set of invariants of finite degree is complete for virtual
pure braids on three strands. \ $\Box$
\end{prop}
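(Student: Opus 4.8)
The plan is to translate the asserted completeness of finite type invariants into a purely ring-theoretic statement about the augmentation ideal $\Delta = \Delta(PV_3)$ of $\mathbb{Z}[PV_3]$, and then to deduce that statement from Theorem~\ref{resnilp}. Completeness in the Goussarov--Polyak--Viro sense means precisely that finite type invariants separate the elements of $PV_3$: for every pair of distinct virtual pure braids $g, h \in PV_3$ there should be a degree $d$ invariant $v$ with $v(g) \neq v(h)$. Since a degree $d$ invariant is a linear functional on $\mathbb{Z}[PV_3]$ that vanishes on $\Delta^{d+1}$ (this is the $PV_n$-side of the filtration correspondence recalled just before the statement), and since the universal such invariant of degree $\le d$ is the projection $\mathbb{Z}[PV_3] \to \mathbb{Z}[PV_3]/\Delta^{d+1}$, the braids $g$ and $h$ are separated by some finite type invariant exactly when $g - h \notin \Delta^{d+1}$ for some $d$. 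Because $g - h \neq 0$ whenever $g \neq h$, the whole collection of finite type invariants is complete as soon as $\bigcap_{n} \Delta^{n} = 0$. Thus the proposition reduces to the claim, already announced in the text, that the intersection of the augmentation powers of $\mathbb{Z}[PV_3]$ vanishes.

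The second and main step is to derive $\bigcap_n \Delta^n = 0$ from the fact (Theorem~\ref{resnilp}) that $PV_3$ is residually torsion free nilpotent. Here I would argue by reduction to torsion free nilpotent quotients. Fix a nonzero element $x = \sum_{g \in S} a_g\, g$ of $\mathbb{Z}[PV_3]$, where $S$ is finite and all $a_g \neq 0$; I must show $x \notin \bigcap_n \Delta^n$. Using residual torsion free nilpotence, for each pair of distinct elements of $S$ one chooses a torsion free nilpotent quotient of $PV_3$ separating them; since a finite direct product of torsion free nilpotent groups is again torsion free nilpotent, these combine into a single quotient $q : PV_3 \to N$ with $N$ torsion free nilpotent on which $q$ is injective on $S$. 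The induced ring homomorphism $\mathbb{Z}[PV_3] \to \mathbb{Z}[N]$ carries $\Delta^n$ into $\Delta(N)^n$ and sends $x$ to $\sum_{g\in S} a_g\, q(g)$, which is a nonzero element of $\mathbb{Z}[N]$ because the $q(g)$ are distinct group elements. Invoking the classical fact that a torsion free nilpotent group $N$ satisfies $\bigcap_n \Delta(N)^n = 0$, the image of $x$ cannot lie in every $\Delta(N)^n$; hence $x \notin \bigcap_n \Delta^n$. As $x$ was an arbitrary nonzero element, $\bigcap_n \Delta^n = 0$, and by the first paragraph the finite type invariants are complete for $PV_3$.

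The routine part is the dictionary of the first paragraph, which is essentially the definition of the Goussarov--Polyak--Viro filtration. The main obstacle is the input cited in the second paragraph, namely that torsion free nilpotent groups have trivial intersection of augmentation powers over $\mathbb{Z}$; this is exactly where the torsion freeness in Theorem~\ref{resnilp} is indispensable, since nilpotent groups with torsion need not enjoy $\bigcap_n \Delta(N)^n = 0$. I would either cite this directly (it is precisely what underlies the assertion in the text that Theorem~\ref{resnilp} forces the augmentation intersection to vanish) or reprove it via the embedding of a finitely generated torsion free nilpotent group into upper unitriangular integer matrices together with the Magnus-type computation of the associated graded of the augmentation filtration. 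A secondary point to verify is that the separation of finitely many elements really can be achieved by one torsion free nilpotent quotient rather than infinitely many; this is handled by the direct product observation above, using that nilpotency class and torsion freeness are both preserved under finite products.
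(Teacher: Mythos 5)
Your proof is correct and takes essentially the same route as the paper: both reduce completeness of finite type invariants to the vanishing of $\bigcap_{n}\Delta(PV_3)^n$ via the Goussarov--Polyak--Viro dictionary, and both deduce that vanishing from Theorem~\ref{resnilp}. The only difference is that where the paper invokes the result of Hartley \cite{H} (the intersection of augmentation powers is zero for residually torsion-free nilpotent groups) as a black box, you unpack the standard reduction to finitely generated torsion-free nilpotent quotients via finite direct products before citing the classical base case --- a correct elaboration rather than a different method.
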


\section{Relations among relations}

\subsection{Presentation of $PV_3$.}
\begin{thm}
The presentation (\ref{asphpres}) is aspherical.
\end{thm}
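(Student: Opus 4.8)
The plan is to reformulate asphericity of \eqref{asphpres} as the statement that the presentation $2$-complex $K$ attached to \eqref{asphpres} has vanishing second homotopy module, equivalently that every identity among the six hexagonal relators is a consequence of the Peiffer identities (i.e.\ that no essential spherical picture exists over the presentation). Since $\pi_2(K)$ is exactly the $\mathbb{Z}[PV_3]$-module of such identities, the theorem asserts $\pi_2(K)=0$, and this is what I would establish.

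My primary route would be Gersten's weight test applied directly to \eqref{asphpres}, which is convenient here because all six relators are hexagons of length $6$. I would form the star graph (Whitehead graph with corner–edges) on the twelve letters $\lambda_{ij}^{\pm1}$, assign the angle $\pi/2$ to every corner, and check the combinatorial Gauss--Bonnet conditions. With this uniform assignment each hexagonal $2$-cell has angle sum $3\pi<4\pi=(6-2)\pi$, so every face has strictly negative curvature; the face condition is therefore automatic. The test then succeeds provided every reduced cycle in the link of each vertex has length at least $4$, since such a cycle has angle sum $(\pi/2)\cdot(\text{length})\ge 2\pi$, forcing nonpositive vertex curvature. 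Negative total curvature would contradict the value $4\pi$ for a sphere, so no nonempty reduced spherical diagram could exist and $K$ would be aspherical.

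As a structural cross-check, and to organise the picture calculus, I would also use Proposition~\ref{decomp}. The passage from \eqref{asphpres} to the presentation of $G_3\ast\langle c_2\rangle$ is a sequence of elementary Tietze moves of the ``adjoin a generator together with one defining equation'' type and their inverses; each such move is a simple-homotopy equivalence of presentation complexes and so preserves asphericity. Since the presentation complex of a free product is the wedge of those of the factors, and a wedge of aspherical complexes is aspherical while $\langle c_2\mid\;\rangle$ is a circle, the problem reduces to the asphericity of the finite presentation of $G_3$ given by $[a_1,b_1]=[a_2,b_2]=1$ together with \eqref{eq:relcong}. For this I would invoke the semidirect decomposition $G_3=Q_3\leftthreetimes\langle c_1\rangle$ and the Lemma above, which realises $G_3$ as the base of the HNN description of $Cb_3=P\varSigma_3$ with associated subgroups $A\cong B\cong F_3$ coming from inner automorphisms; attaching the stable letter $\gamma_2$ glues a tube along these free, hence one-dimensional, loci, so a $2$-cycle supported in a single copy of $\widetilde{K_{G_3}}$ inside $\widetilde{K_{Cb_3}}$ cannot bound. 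Thus asphericity would descend from McCool's presentation of $Cb_3$ to $G_3$, matching the weight-test conclusion.

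The main obstacle in either route is the vertex/link condition. The Yang--Baxter/commutator shape of the relators tends to create short reduced cycles (bigons and triangles) in the star graph, precisely because several relators share generator pairs; a cycle of length $2$ or $3$ would violate the inequality needed for the uniform $\pi/2$ weights. The delicate work is therefore to verify relator-by-relator that the girth of the star graph is at least $4$, or, failing that, to replace the uniform weights by a tuned angle assignment that still keeps all faces nonpositively curved while forcing every short link cycle to carry enough angle. Carrying this out — equivalently, proving directly that the only identities among the relators of \eqref{asphpres} are the Peiffer ones — is where the real argument lies.
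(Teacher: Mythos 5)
Your reformulation of asphericity as $\pi_2(K)=0$ for the presentation complex $K$ is the right starting point, but neither of your two routes is carried to completion, and the first fails as stated. With uniform weights $\pi/2$ the link condition of the weight test is violated: the star graph of (\ref{asphpres}) has girth $2$, not $\geq 4$. For instance, the corner $\lambda_{13}\lambda_{23}$ of the first relator and the corner $\lambda_{23}^{-1}\lambda_{13}^{-1}$ of the second relator determine two distinct edges joining the same pair of vertices of the star graph (namely $\lambda_{13}^{-1}$ and $\lambda_{23}$, in either standard convention), giving a reduced bigon of angle sum $\pi<2\pi$; the commutation-type relators of (\ref{asphpres}) produce many such bigons. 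You anticipate exactly this obstruction, but you leave the repair (a tuned weight assignment, or a direct analysis of identities among relations) entirely open and yourself call it ``where the real argument lies'', so no proof is given. The second route is not a proof either: it needs asphericity of (a presentation of) $Cb_3$ as an input, which is established nowhere and is not easier than the statement to be proved, and the claim that a $2$-cycle supported in $\widetilde{K_{G_3}}$ ``cannot bound'' after attaching the HNN tube along the free associated subgroups is an assertion, not an argument --- asphericity does not pass between an HNN extension and its base without substantive hypotheses. (Your reduction to $G_3\ast\la c_2\ra$ via generator-adjoining moves is itself unobjectionable, but it only relocates the problem.)

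For comparison, the paper's proof is entirely homological and leans on Theorem \ref{resnilp}, which you never use. From the Hopf exact sequence $0\to H_3(PV_3)\to \pi_2(K)_{PV_3}\to H_2(K)\to H_2(PV_3)\to 0$, together with $H_2(PV_3)\cong\Z^{\oplus 6}$, $H_3(PV_3)=0$ (since $PV_3$ is a semidirect product of free groups) and the fact that $H_2(K)$ is free abelian of rank at most six ($K$ has six $2$-cells), one concludes that the coinvariants $\pi_2(K)_{PV_3}$ vanish. Hence every $\alpha\in\pi_2(K)$ lies in $\Delta(PV_3)^n\pi_2(K)$ for all $n$; embedding $\pi_2(K)=H_2(\tilde K)\hookrightarrow C_2(\tilde K)\cong \Z[PV_3]^{\oplus 6}$, the coordinates of $\alpha$ lie in $\bigcap_n\Delta(PV_3)^n$, which is zero by Hartley's theorem because $PV_3$ is residually torsion-free nilpotent. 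Therefore $\pi_2(K)=0$. This route bypasses all diagrammatic combinatorics --- precisely the part your proposal leaves unresolved --- and shows why the paper proves residual torsion-free nilpotence first: it is the key input to asphericity, not an independent result.
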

\begin{proof}
Consider the standard 2-complex $K$ constructed for the
presentation (\ref{asphpres}). We have the following 
exact sequence of abelian groups, called also Hopf exact sequence:
\begin{equation}
0\to H_3(PV_3)\to \pi_2(K)_{PV_3}\to H_2(K)\to H_2(PV_3)\to 0
\end{equation}
One can 
obtain this sequence by applying the homology
functor $H_*(PV_3,-)$ to the chain complex $C_*(\tilde K)$ of the
universal covering space $\tilde K$ of $K$.

It is easy to prove that $H_2(PV_3)=\mathbb Z^{\oplus 6}$. For
that we can either use the result from \cite{BEER}, where all
homology of virtual pure braid groups are described, or consider
the semidirect product decomposition (\ref{semi}) and observe that
$H_2(PV_3)=H_1(V_1, H_1(V_2^{*}))$. Since $PV_3$ is a semidirect
product of two free groups, its third homology is zero. Now
observe that $H_2(K)$ is a free abelian group of a rank less or
equal to six, since there are only six 2-dimensional cells in the $2$-complex $K$.
Hence, the map $H_2(K)\to H_2(PV_3)$ is an isomorphism and
therefore, $\pi_2(K)_{PV_3}=0$. Let $\alpha\in \pi_2(K)$. Since
$\pi_2(K)_{PV_3}=0,$ for every $n\geq 1$, the element $\alpha$ can
be presented as a finite sum
\begin{equation}\label{d}
\alpha=\sum_i (1-g_1^{i})\dots (1-g_n^{i})\alpha_i
\end{equation}
for some elements $g_j^{i}\in PV_3$. Here $(1-g)\beta=\beta-g\circ
\beta,\ g\in PV_3,\ \beta\in \pi_2(K)$ and the action of
$PV_3=\pi_1(K)$ on $\pi_2(K)$ is standard. Consider the
monomorphism of $\mathbb Z[PV_3]$-modules:
$$
f: \pi_2(K)=H_2(\tilde K)\hookrightarrow C_2(\tilde K)\cong
\mathbb Z[PV_3]^{\oplus 6},
$$
where $C_2(\tilde K)$ is the second term of the chain complex
$C_*(\tilde K)$. The coordinates of the monomorphism $f$ are
elements of $\mathbb Z[PV_3]$ which lie in the intersection of
powers of augmentation ideal $\Delta(PV_3)
$
due to the existence of the presentation
(\ref{d}) for every $n\geq 1$. However, for a residually
torsion-free nilpotent group, the intersection of powers of
augmentation ideals is zero \cite{H}, hence $\alpha=0$ by
Theorem~\ref{resnilp}. Hence, the presentation (\ref{asphpres}) is
aspherical.
\end{proof}

\subsection{Presentation of $PV_n,\ n\geq 4$.} For $n\geq 1,$ as we mentioned in introduction, the
groups $PV_n$ admit the following presentation
\begin{multline}\label{higher} \langle \lambda_{ij},\ 1\leq i\neq j\leq n\
|\ [\lambda_{ij},\lambda_{kl}]=1,\
\lambda_{ki}\lambda_{kj}\lambda_{ij}\lambda_{ki}^{-1}\lambda_{kj}^{-1}\lambda_{ij}^{-1}=1\rangle
\end{multline}
where distinct letters stand for distinct indices. The classifying
spaces of groups $PV_n$ are constructed in \cite{BEER} as
natural quotients of unions of permutohedra.

\begin{figure}
\begin{center}
\epsfxsize 120mm \epsfbox{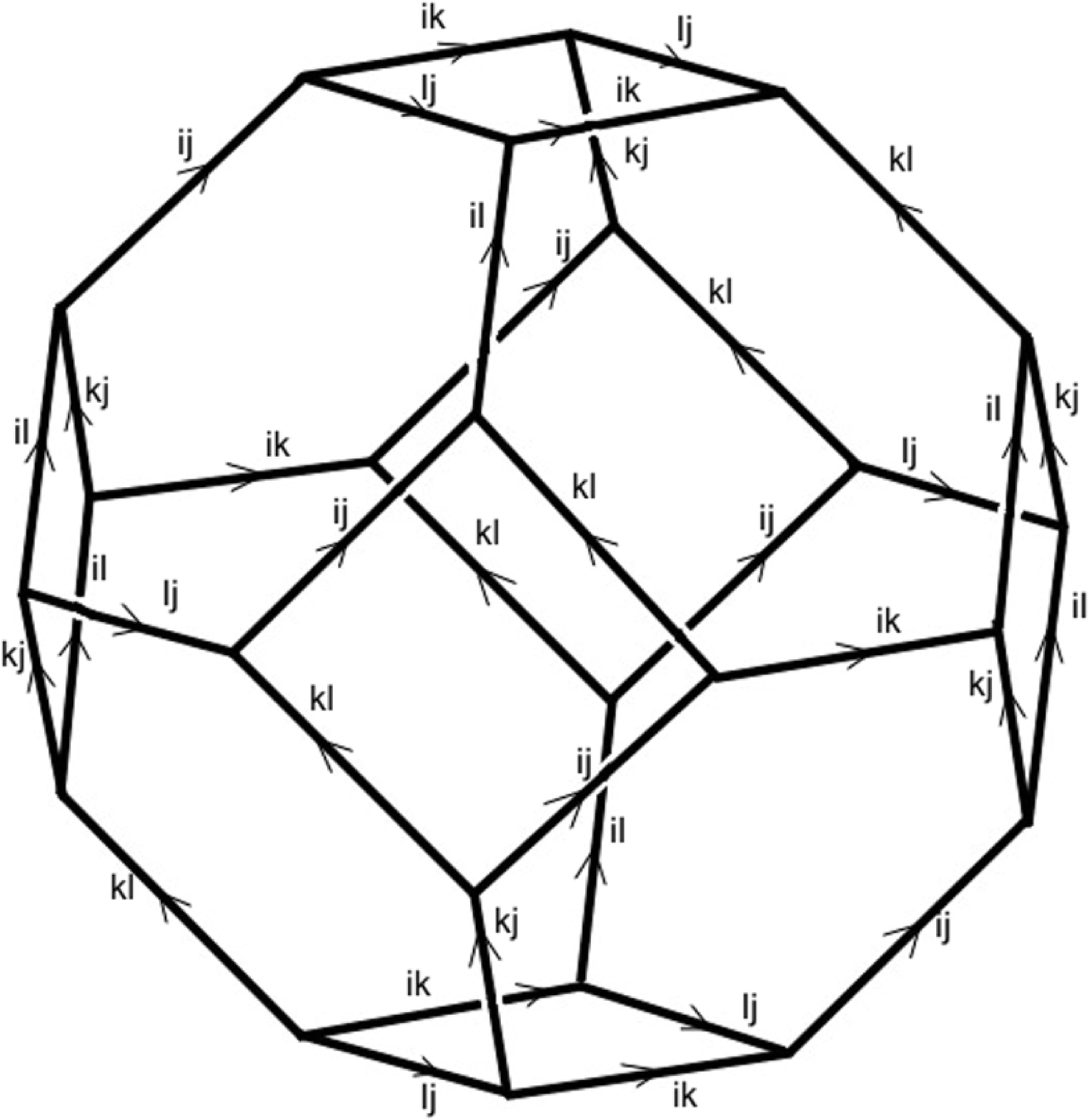}
\end{center}
\caption{Truncated octahedron and relation among relations from
(\ref{higher})} \label{f1}
\end{figure}

Recall that, by a result of Deligne \cite{D} and Salvetti
\cite{Sal}, the classifying space of a braid group on $n$ strands
can be constructed as a quotient of the $n$-th permutohedron. In
\cite{Lo}, the interpretation for this construction is given in
terms of homotopical syzygies for presentations of braid groups.
It is shown in \cite{Lo} that  the relations among relations in
usual presentation of braid groups can be viewed as labellings of
the permutohedron $P_4,$ or truncated octahedron. Here we do the
same for the groups $PV_n,\ n\geq 4.$ The relations from
(\ref{higher}) are of length four and six. The  labelling of the
truncated octahedron presented in the Figure 1 gives the needed
relation. In the Figure 1, the labeling $ij$ of an arrow means
that the generator $\lambda_{ij}$ corresponds to this arrow.

Now one can follow the ideology of the paper \cite{Lo} and find
the interpretation of the construction of the classifying space
for $PV_n$ from \cite{BEER} in the same way as the result of
Deligne \cite{D} and Salvetti \cite{Sal} is interpreted in
\cite{Lo}. It is clear that the higher permutohedra appear in the
construction of higher homotopical syzygies for the presentations
(\ref{higher}).

\section{Cohomology Ring of $PV_3$}

\subsection{A Representation of the Ring $H^*(G_3)$}
Let $X_1=T\{a_1,b_1\}$ be the torus $T$ labeled by $\{a_1,b_1\}$. Let $\{x_1,x_2\}$ be the standard basis for $\pi_1(X_1)=\pi_1(T)$. According to the defining relation $[a_1,b_1]=1$ in $G_3$, there is a map
$$
\phi_1\colon X_1\longrightarrow BG_3
$$
such that in the fundamental groups
\begin{equation}\label{equation5.1}
\phi_{1\ast}(x_1)=a_1\textrm{ and } \phi_{1\ast}(x_2)=b_1.
\end{equation}
Similarly, from the defining relation $[a_2,b_2]=1$ in $G_3$, there is a torus $X_2=T\{a_2,b_2\}$ with a map
$$
\phi_2\colon X_2\longrightarrow BG_3
$$
such that in the fundamental groups
\begin{equation*}
\phi_{2\ast}(x_3)=a_2\textrm{ and } \phi_{2\ast}(x_4)=b_2,
\end{equation*}
where $\{x_3,x_4\}$ is the standard basis for $\pi_1(X_2)$.
Consider the defining relation
$$
\begin{array}{cl}
 &b_1^{c_1}=b_1^{a_2}\\
\Longleftrightarrow&  [b_1,c_1]=[b_1,a_2]\\
\Longleftrightarrow& [b_1,c_1][a_2,b_1]=1\\
\end{array}
$$
in $G_3$. Let $X_3=S_2\{b_1,c_1;a_2,b_1\}$ be the closed oriented
surface of genus $2$ labeled by $\{b_1,c_1;a_2,b_1\}$ with
standard generators $\{y_{11}, z_{11},y_{12},z_{12}\}$ of its
fundamental group $\pi_1(X_3)$ and defining relation
$[y_{11},z_{11}][y_{12},z_{12}]=1$. Then there is map
$$
\phi_3\colon X_3\longrightarrow BG_3
$$
such that in the fundamental groups
\begin{equation*}
\phi_{3\ast}(y_{11})=b_1,\ \phi_{3\ast}(z_{11})=c_1,\ \phi_{3\ast}(y_{12})=a_2 \textrm{ and } \phi_{3\ast}(z_{12})=b_1.
\end{equation*}
Similar constructions apply to the remaining three defining relations in equation~(\ref{eq:relcong}). So we have the spaces
\begin{equation}\label{equation5.3}
\begin{array}{c}
X_4=S_2\{a_1,c_1;b_2,a_1\},\\
X_5=S_2\{b_2,c_1;a_1b_2,b_2\}\textrm{ and }\\
X_6=S_2\{a_2,c_1;b_1a_2,a_2\}\\
\end{array}
\end{equation}
with the maps $\phi_i\colon X_i \to BG_3$, $i=4,5,6$, such that in fundamental groups $\phi_{i\ast}$ sends the standard basis $\{y_{(i-2)1},z_{(i-2)1},y_{(i-2)2},z_{(i-2)2}\}$ into the corresponding words in the labeling bracket for $X_i$, where the standard defining relation in $\pi_1(X_i)$ is $[y_{(i-2)1},z_{(i-2)1}][y_{(i-2)2},z_{(i-2)2}]=1$. For instance, $\phi_{5\ast}(y_{31})=a_1b_2$.

Let $X=\bigvee\limits_{i=1}^6 X_i$ and let
\begin{equation}\label{equation5.4}
\phi\colon X\longrightarrow BG_3
\end{equation}
be the map such that $\phi|_{X_i}=\phi_i$ for $1\leq i\leq 6$. The abelianization $G_3\to G_3^{\mathrm{ab}}=\Z^{\oplus 5}$ induces a map
\begin{equation}\label{equation5.5}
\psi\colon BG_3\longrightarrow B\Z^{\oplus 5}=T^5.
\end{equation}

\begin{lem}\label{lemma5.1}
The following properties hold:
\begin{enumerate}
\item[1)] The algebra map
$$
\phi^*\colon H^*(BG_3)\longrightarrow H^*(X)
$$
is a monomorphism. Moreover
$$
\phi^*\colon H^2(BG_3)\longrightarrow H^2(X)
$$
is an isomorphism of abelian groups.
\item[2)] The algebra map
$$
\psi^*\colon H^*(T^5)\longrightarrow H^*(BG_3)
$$
is an epimorphism.
\end{enumerate}
\end{lem}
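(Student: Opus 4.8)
The plan is to first pin down the cohomology groups of $BG_3$, then establish the two assertions separately, using throughout that the $X_i$ are closed orientable surfaces whose cup-product structure is completely understood.

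First I would compute $H^*(BG_3)$. By Proposition~\ref{decomp} we have $PV_3\cong G_3\ast\Z$, so for $r\geq 1$ the homology splits as $H_r(PV_3)\cong H_r(G_3)\oplus H_r(\Z)$. Inserting the ranks $1,6,6,0,\dots$ of $H_r(PV_3)$ from \cite{BEER} gives $H_0(G_3)=\Z$, $H_1(G_3)=\Z^{\oplus 5}$, $H_2(G_3)=\Z^{\oplus 6}$, and $H_r(G_3)=0$ for $r\geq 3$; since $G_3$ is a free factor of $PV_3$ whose presentation (\ref{asphpres}) is aspherical, $\mathrm{cd}\,G_3\leq 2$ and the vanishing above degree $2$ is automatic. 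As all these groups are free, the universal coefficient theorem yields $H^0=\Z$, $H^1=\Z^{\oplus 5}$, $H^2=\Z^{\oplus 6}$, and $H^{\geq 3}(BG_3)=0$. Finally, in the presentation $2$-complex $K$ of $G_3$ (five $1$-cells, six $2$-cells) every relator in (\ref{eq:relcong}) and the two commutators are products of commutators, so the cellular boundary $\partial_2$ vanishes and $H_2(K)=\Z^{\oplus 6}$ is freely generated by the six relation $2$-cells.

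For part (1) the crucial statement is the one in degree $2$. The natural map $K\to BG_3$ induces a surjection $H_2(K)=\Z^{\oplus 6}\twoheadrightarrow H_2(BG_3)=\Z^{\oplus 6}$ of equal free ranks, hence an isomorphism, so $H_2(BG_3)$ carries the six relation classes $e_1,\dots,e_6$ as a basis. Each $\phi_i$ may be taken cellular with the top $2$-cell of $X_i$ mapped onto $e_i$, since the boundary word of the fundamental polygon of $X_i$ is exactly the product of commutators defining the $i$-th relation (for instance $[y_{11},z_{11}][y_{12},z_{12}]\mapsto[b_1,c_1][a_2,b_1]$ for $X_3$). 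Thus $\phi_{i\ast}[X_i]=\pm e_i$, so $\phi_\ast\colon H_2(X)=\bigoplus_i H_2(X_i)\to H_2(BG_3)$ sends the fundamental classes onto the basis and is an isomorphism; dualizing (all groups free abelian), $\phi^\ast$ is an isomorphism on $H^2$. The monomorphism claim then follows degreewise: it is trivial in degree $0$ and in degrees $\geq 3$, it is the above isomorphism in degree $2$, and in degree $1$ a class $u\in H^1(BG_3)=\mathrm{Hom}(G_3^{\mathrm{ab}},\Z)$ with $\phi^\ast u=0$ must vanish on every generator $a_1,a_2,b_1,b_2,c_1$, each being the $\phi_{i\ast}$-image of a standard $\pi_1(X_i)$-generator, whence $u=0$.

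For part (2), $\psi$ is induced by the abelianization $G_3\to G_3^{\mathrm{ab}}=\Z^{\oplus 5}$, so $\psi^\ast$ is an isomorphism in degrees $0$ and $1$ and, being an algebra map, has image equal to the subalgebra of $H^\ast(BG_3)$ generated by $H^1$. As $H^\ast(BG_3)$ is concentrated in degrees $\leq 2$, it suffices to show the cup product $H^1(BG_3)\otimes H^1(BG_3)\to H^2(BG_3)$ is onto; since $\phi^\ast$ is a multiplicative isomorphism on $H^2$, this is equivalent to showing the classes $\phi^\ast u\cup\phi^\ast v$ generate $H^2(X)=\bigoplus_i H^2(X_i)=\Z^{\oplus 6}$ as $u,v$ range over the dual basis of $H^1(BG_3)$. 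I would check this surface by surface: on $X_1$ the cup product of the degree-one classes dual to $a_1,b_1$ is the generator of $H^2(X_1)$ and restricts to zero on the other summands, similarly for $X_2$, and on each genus-$2$ surface $X_i$ ($i=3,\dots,6$) an appropriate product (e.g.\ the classes dual to $b_1,c_1$ on $X_3$) hits the fundamental class because in the symplectic basis only $\eta_{i1}\cup\zeta_{i1}$ and $\eta_{i2}\cup\zeta_{i2}$ are nonzero. The main obstacle is precisely this bookkeeping: one must first compute each $\phi^\ast u$ on every surface, where a single generator of $G_3$ can be the image of several $\pi_1(X_i)$-generators (as with $b_1$ occurring twice in $X_3$) or sit inside a product such as $a_1b_2$ in $X_5$, and then verify that the resulting degree-$2$ classes not only meet each summand $H^2(X_i)$ but together span all of $\Z^{\oplus 6}$ rather than a proper subgroup. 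The identification $\phi_{i\ast}[X_i]=\pm e_i$ in part (1) is the other delicate point, resting on matching polygon boundary words with the relators.
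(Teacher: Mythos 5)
Your proposal is correct in outline; part 1) is proved by a genuinely different route, while part 2) follows the paper's route but stops short of the decisive computation.

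For part 1), the paper derives everything from a single brute-force computation: it writes out $\theta_*=(\psi\circ\phi)_*$ on $H_1$ (formula~(\ref{equation5.7})), dualizes to get $\theta^*$ on $H^1$ (formula~(\ref{equation5.8})), and checks that the six products $\theta^*(a_1^*b_1^*),\dots,\theta^*(b_2^*c_1^*)$ are precisely the standard basis of $H^2(X)$ (formula~(\ref{equation5.9})); surjectivity of $\theta^*=\phi^*\circ\psi^*$ in degree $2$ forces $\phi^*$ to be onto, the rank count $H^2(BG_3)\cong H_2(PV_3)\cong\Z^{\oplus 6}\cong H^2(X)$ upgrades this to an isomorphism, and both assertions then drop out of $H^i(BG_3)=0$ for $i>2$. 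You instead obtain the degree-$2$ isomorphism geometrically: the Hopf sequence plus the vanishing of $\partial_2$ (every relator of $G_3$ has zero exponent sums) identifies $H_2(BG_3)$ with $H_2(K)=\Z^{\oplus 6}$, freely generated by the relation $2$-cells, and then $\phi_{i\ast}[X_i]=\pm e_i$ because each surface's polygon boundary word is exactly the corresponding relator in commutator form. This is a legitimate and arguably cleaner argument: it needs no cup products for part 1) and explains conceptually why the ranks match (the surfaces were manufactured from the relators). It consumes the same external inputs the paper uses: $H_2(PV_3)\cong\Z^{\oplus 6}$ from \cite{BEER} and the asphericity/dimension argument to kill $H^{>2}(BG_3)$.

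For part 2) you revert to exactly the paper's strategy (test surjectivity of $H^1\otimes H^1\to H^2$ after pulling back to $X$), and here your argument is not finished: exhibiting, for each summand $H^2(X_i)$, one product of pulled-back degree-one classes with nonzero component there does not by itself show that these products span $\Z^{\oplus 6}$, as you yourself acknowledge. The missing verification is precisely the paper's formula~(\ref{equation5.9}), and it closes in the strongest possible way, namely diagonally: each of the six products $a_1^*b_1^*$, $a_2^*b_2^*$, $a_1^*c_1^*$, $b_1^*c_1^*$, $a_2^*c_1^*$, $b_2^*c_1^*$ pulls back to exactly one basis vector of $H^2(X)$, all cross terms vanishing because the two factors either live on disjoint wedge summands or pair non-symplectically on a common summand (e.g.\ $z_{22}^*z_{21}^*=0$ on $X_4$, $y_{32}^*z_{31}^*=0$ on $X_5$). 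Your sample computations ($x_1^*x_2^*$ for $X_1$, $y_{11}^*z_{11}^*$ for $X_3$) are instances of this, so your outline does complete to a correct proof; but the spanning check is the substance of the lemma and must be carried out, not deferred as bookkeeping.
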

\begin{proof}
Let $\theta=\psi\circ\phi\colon X\to T^5$. We first prove that
$$
\theta^*\colon H^2(T^5)\longrightarrow H^2(X)
$$
is onto. Observe that $H_1(X)$ is the free abelian group with a basis
$$
\{x_i, y_{i1}, z_{i1},y_{i2},z_{i2}\ | \ 1\leq i\leq 4\}
$$
and $H_1(T^5)=H_1(BG_3)$ is the free abelian group with a basis $$\{a_1,b_1,a_2,b_2,c_1\},$$ where we use the same notation for the induced generators in the first homology group from the fundamental group. From the construction of the map $\phi$, the linear transformation
$$
\theta_*\colon H_1(X)\to H_1(T^5)
$$
is given by the following formula:
\begin{equation}\label{equation5.7}
\begin{array}{rclrclrclrcl}
\theta_*(x_1)&=& a_1, &\theta_*(x_2)&=& b_1, &\theta_*(x_3)&=& a_2, &\theta_*(x_4)&=& b_2,\\
\theta_*(y_{11})&=& b_1, &\theta_*(z_{11})&=& c_1, &\theta_*(y_{12})&=& a_2, &\theta_*(z_{12})&=& b_1,\\
\theta_*(y_{21})&=& a_1, &\theta_*(z_{21})&=& c_1, & \theta_*(y_{22})&=& b_2, &\theta_*(z_{22})&=& a_1,\\
\theta_*(y_{31})&=& b_2, &\theta_*(z_{31})&=& c_1, &\theta_*(y_{32})&=& a_1+b_2, &\theta_*(z_{32})&=& b_2,\\
\theta_*(y_{41})&=& a_2, &\theta_*(z_{41})&=& c_1, &\theta_*(y_{42})&=& b_1+a_2, &\theta_*(z_{42})&=& a_2.\\
\end{array}
\end{equation}
Consider the dual basis
$$
\{x_i^*, y_{i1}^*, z_{i1}^*,y_{i2}^*,z_{i2}^*\ | \ 1\leq i\leq 4\}
$$
for $H^1(X)$ and
$$\{a_1^*,b_1^*,a_2^*,b_2^*,c_1^*\}$$
for $H^1(T^5)$. The linear transformation $\theta^*\colon H^1(T^5)\to H^1(X)$ is given by the formula
\begin{equation}\label{equation5.8}
\begin{array}{rcl}
\theta^*(a_1^*)&=& x_1^*+y_{21}^*+z_{22}^*+y_{32}^*,\\
\theta^*(b_1^*)&=&x_2^*+y_{11}^*+z_{12}^*+y_{42}^*,\\
\theta^*(a_2^*)&=&x_3^*+y_{12}^*+y_{41}^*+y_{42}^*+z_{42}^*,\\
\theta^*(b_2^*)&=&x_4^*+y_{22}^*+y_{31}^*+y_{32}^*+z_{32}^*,\\
\theta^*(c_1)&=&z_{11}^*+z_{21}^*+z_{31}^*+z_{41}^*.\\
\end{array}
\end{equation}
From the algebra structure on $H^*(X)$, we have
\begin{equation}\label{equation5.9}
\begin{array}{rclrclrcl}
\theta^*(a_1^*b_1^*)&=&x_1^*x_2^*,&\theta^*(a_2^*b_2^*)&=&x_3^*x_4^*,&\theta^*(a_1^*c_1^*)&=&y_{21}^*z_{21}^*,\\
\theta^*(b_1^*c_1^*)&=&y_{11}^*z_{11}^*,&\theta^*(a_2^*c_1^*)&=&y_{41}^*z_{41}^*,&\theta^*(b_2^*c_1^*)&=&y_{31}^*z_{31}^*.\\
\end{array}
\end{equation}
Since
$$
\left\{x_1^*x_2^*,x_3^*x_4^*,y_{21}^*z_{21}^*,y_{11}^*z_{11}^*,y_{41}^*z_{41}^*,y_{31}^*z_{31}^*\right\}
$$
forms a basis for $H^2(X)$, the linear map
$$
\theta^*\colon H^2(T^5)\longrightarrow H^2(X)
$$
is onto.

Now since $\theta^*=\phi^*\circ\psi^*$, the linear map
$$
\phi^*\colon H^2(BG_3)\longrightarrow H^2(X)
$$
is onto and so it is an isomorphism because $$H^2(BG_3)\cong H^2(PV_3)\cong H_2(PV_3)\cong \Z^{\oplus 6}$$ and $H^2(X)\cong \Z^{\oplus 6}$. This proves assertion 1 because clearly $$\phi^*\colon H^1(BG_3)\to H^1(X)$$ is a monomorphism. Assertion 2 also follows because $H^i(BG_3)=0$ for $i>2$.
\end{proof}

\subsection{The Cohomology Ring $H^*(G_3)$}
From formulas~(\ref{equation5.8}) and~(\ref{equation5.9}), we have
$$
\begin{array}{rcl}
\theta^*(b_2^*a_1^*)&=& (x_4^*+y_{22}^*+y_{31}^*+y_{32}^*+z_{32}^*)(x_1^*+y_{21}^*+z_{22}^*+y_{32}^*)\\
&=&y_{22}^*z_{22}^*+z_{32}^*y_{32}^*\\
&=&y_{21}^*z_{21}^*-y_{31}^*z_{31}^*\\
&=&\theta^*(a_1^*c_1^*)-\theta^*(b_2^*c_1^*)\\
\end{array}
$$
and so we have the relation
\begin{equation}\label{equation5.10}
a_1^*c_1^*+a_1^*b_2^*+c_1^*b_2^*\equiv 0
\end{equation}
in $H^*(BG_3)$. Similarly, by computing $\theta^*(a_2^*b_1^*)$, we have
\begin{equation}\label{equation5.11}
b_1^*c_1^*+b_1^*a_2^*+c_1^*a_2^*\equiv 0
\end{equation}
in $H^*(BG_3)$. From formula~(\ref{equation5.8}), we have
$$
\begin{array}{rcl}
\theta^*(b_2^*b_1^*)&=&(x_4^*+y_{22}^*+y_{31}^*+y_{32}^*+z_{32}^*)(x_2^*+y_{11}^*+z_{12}^*+y_{42}^*)\\
&=&0.\\
\end{array}
$$
Similarly, $\theta^*(a_2^*a_1^*)=0$ and so we have the relations
\begin{equation}\label{equation5.12}
a_1^*a_2^*=b_1^*b_2^*\equiv0
\end{equation}
in  $H^*(BG_3)$.

\begin{thm}\label{theorem5.2}
The cohomology ring $H^*(G_3)$ is the quotient algebra of the exterior algebra
$$
E(a_1^*,b_1^*,a_2^*,b_2^*,c_1^*)
$$
subject to the four defining relations given in formulas~(\ref{equation5.10}), ~(\ref{equation5.11}) and ~(\ref{equation5.12}).
\end{thm}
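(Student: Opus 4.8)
The plan is to realize $H^*(G_3)$ as a quotient of the exterior algebra $E=E(a_1^*,b_1^*,a_2^*,b_2^*,c_1^*)$ and then to check that the defining ideal is precisely the one generated by the four relations already derived, by a degree-by-degree rank count. First I would invoke Lemma~\ref{lemma5.1}(2): the epimorphism $\psi^*\colon H^*(T^5)\to H^*(BG_3)$ together with the identification $H^*(T^5)=E$ shows that $H^*(G_3)$ is generated as a ring by the one-dimensional classes $a_1^*,\dots,c_1^*$, hence is a quotient of $E$. Since the four relations in formulas~(\ref{equation5.10}), (\ref{equation5.11}) and~(\ref{equation5.12}) were shown to hold in $H^*(BG_3)$, the map $\psi^*$ factors as $E\twoheadrightarrow Q\xrightarrow{\ \bar\pi\ }H^*(G_3)$, where $Q=E/I$ and $I$ is the homogeneous ideal generated by those four degree-two elements. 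It then suffices to prove that $\bar\pi$ is an isomorphism.

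Next I would compare the two graded rings degree by degree, using the ranks $H^0=\Z$, $H^1=\Z^{\oplus5}$, $H^2\cong\Z^{\oplus 6}$ and $H^i=0$ for $i\geq3$ recorded in the proof of Lemma~\ref{lemma5.1}. In degrees $0$ and $1$ the ideal $I$ contributes nothing, and on $H^1$ the map $\psi^*$ is dual to the abelianization isomorphism, so $\bar\pi$ is an isomorphism there. In degree $2$ I would observe that, written in the standard monomial basis of $E^2\cong\Z^{\oplus10}$, the four relations have pairwise disjoint supports, with distinguished monomials $a_1^*a_2^*$, $b_1^*b_2^*$, $a_1^*b_2^*$ and $b_1^*a_2^*$; hence they span a rank-four direct summand and $Q^2$ is free of rank six. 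A surjection $Q^2\twoheadrightarrow H^2\cong\Z^{\oplus6}$ of free abelian groups of equal rank is then automatically an isomorphism.

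The main step, and the only genuine obstacle, lies in degrees $\geq3$, where $\bar\pi$ carries no information because $H^{\geq3}=0$; here one must show directly that $Q^i=0$, that is, $I^3=E^3$. Everything above degree three follows formally once this is known, since $E$ is generated in degree one and $I$ is an ideal, giving $E^i=E^{\,i-3}\cdot E^3=E^{\,i-3}\cdot I^3\subseteq I$ for $i>3$. Thus the whole theorem reduces to the linear-algebra verification that the twenty products $e\cdot R_j$ (with $e$ a degree-one generator and $R_j$ one of the four relations) span $E^3\cong\Z^{\oplus10}$. I expect this to be routine: the products of $a_1^*a_2^*$ and of $b_1^*b_2^*$ with the generators already yield six of the ten basis monomials, the products $c_1^*\cdot R_1=a_1^*b_2^*c_1^*$ and $c_1^*\cdot R_2=b_1^*a_2^*c_1^*$ supply two more, and the remaining two, $a_1^*b_1^*c_1^*$ and $a_2^*b_2^*c_1^*$, are obtained from $b_1^*\cdot R_1$ and $a_2^*\cdot R_1$ modulo the monomials already produced. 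Hence $I^3=E^3$, so $Q^{\geq3}=0=H^{\geq3}$, and $\bar\pi$ is an isomorphism of graded algebras in every degree, which is the assertion of the theorem.
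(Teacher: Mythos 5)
Your proposal is correct and follows essentially the same route as the paper's proof: both realize $H^*(G_3)$ as a quotient of the exterior algebra $E(a_1^*,b_1^*,a_2^*,b_2^*,c_1^*)$ via Lemma~\ref{lemma5.1}, identify the quotient with $H^*(BG_3)$ in degrees $\leq 2$, and reduce the whole theorem to the vanishing of the degree-three part. Your check that the products of the four relations with the degree-one generators span $E^3$ is the same linear-algebra computation as the paper's monomial-by-monomial verification that $A^3=0$, and your unimodularity argument in degree two (disjoint supports, unit pivots) simply makes explicit the rank count that the paper leaves implicit in asserting $A^i\cong H^i(BG_3)$ for $i\leq 2$.
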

\begin{proof}
Let $A$ be the quotient algebra of $H^*(T^5)=E(a_1^*,b_1^*,a_2^*,b_2^*,c_1^*)$ subject to the four defining relations given in formulas~(\ref{equation5.10}), ~(\ref{equation5.11}) and ~(\ref{equation5.12}). Then
$$
A^i\cong H^i(BG_3)
$$
for $i\leq 2$. It suffices to check that $A^3=0$. From formula~(\ref{equation5.12}), we have
$$
a_1^*a_2^*b_1^*\equiv a_1^*a_2^*b_2^*\equiv a_1^*a_2^*c_1^*\equiv b_1^*b_2^*a_1^*\equiv b_1^*b_2^*a_2^*\equiv b_1^*b_2^*c_1^*\equiv 0.
$$
From formulas~(\ref{equation5.10}) and~(\ref{equation5.12}), we have
$$
a_1^*c_1^*b_1^*\equiv a_1^*c_1^*b_2^*\equiv 0.
$$
From formulas~(\ref{equation5.11}) and~(\ref{equation5.12}), we have $a_2^*c_1^*b_1^*\equiv 0$ and
$$
a_2^*c_1^*b_2^*\equiv (b_1^*c_1^*-b_1^*a_2^*)b_2^*\equiv 0.
$$
Thus $A^3=0$ and hence the result.
\end{proof}

\subsection{The Cohomology Ring $H^*(PV_3)$}
From the decomposition of $PV_3$ in Proposition~\ref{decomp}, there is a group isomorphism
$$
\delta\colon G_3\ast \la c_2\ra=G_3\ast\Z\longrightarrow PV_3
$$
with
$$
\begin{array}{cc}
\delta(a_1)=\lambda_{13}\lambda_{23},& \delta(b_1)=\lambda_{13}\lambda_{12},\\
\delta(a_2)=\lambda_{32}\lambda_{31},& \delta(b_2)=\lambda_{21}\lambda_{31},\\
\delta(c_1)=\lambda_{13}\lambda_{31},&\delta(c_2)=\lambda_{13}.\\
\end{array}
$$
Thus
$$
\delta_*\colon H_1(G_3\ast\Z)\longrightarrow H_1(PV_3)
$$
is the linear transformation given by
\begin{equation}\label{equation5.13}
\begin{array}{cc}
\delta_*(a_1)=\lambda_{13}+\lambda_{23},& \delta_*(b_1)=\lambda_{13}+\lambda_{12},\\
\delta_*(a_2)=\lambda_{32}+\lambda_{31},& \delta_*(b_2)=\lambda_{21}+\lambda_{31},\\
\delta_*(c_1)=\lambda_{13}+\lambda_{31},&\delta_*(c_2)=\lambda_{13}\\
\end{array}
\end{equation}
and so its dual $\delta^*\colon H^1(PV_3)\to H^1(G_3\ast\Z)$ is given by the formula
\begin{equation}\label{equation5.14}
\begin{array}{rcl}
\delta^*(\lambda_{13}^*)&=&a_1^*+b_1^*+c_1^*+c_2^*,\\
\delta^*(\lambda_{31}^*)&=&a_2^*+b_2^*+c_1^*,\\
\delta^*(\lambda_{12}^*)&=&b_1^*,\\
\delta^*(\lambda_{21}^*)&=&b_2^*,\\
\delta^*(\lambda_{23}^*)&=&a_1^*,\\
\delta^*(\lambda_{32}^*)&=&a_2^*.\\
\end{array}
\end{equation}
It follows that $(\delta^*)^{-1}\colon H^1(G_3\ast\Z)\to H^1(PV_3)$ is given by
\begin{equation}\label{equation5.15}
\begin{array}{rcl}
(\delta^*)^{-1}(a_1^*)&=&\lambda_{23}^*,\\
(\delta^*)^{-1}(b_1^*)&=&\lambda_{12}^*,\\
(\delta^*)^{-1}(a_2^*)&=&\lambda_{32}^*,\\
(\delta^*)^{-1}(b_2^*)&=&\lambda_{21}^*,\\
(\delta^*)^{-1}(c_1^*)&=&\lambda_{31}^*-\lambda_{32}^*-\lambda_{21}^*,\\
(\delta^*)^{-1}(c_2^*)&=&(\lambda_{13}^*-\lambda_{31}^*)+(\lambda_{32}^*-\lambda_{23}^*)+(\lambda_{21}^*-\lambda_{12}^*).\\
\end{array}
\end{equation}
In the cohomology ring $H^*(G_3\ast\Z)=H^*(BG_3\vee S^1)$, we have
$$
c_2^*\alpha=0
$$
for any $\alpha\in H^1(G_3\ast\Z)$. Thus we have relations
\begin{equation}\label{equation5.16}
(\lambda_{13}^*-\lambda_{31}^*)\lambda_{ij}^*\equiv (\lambda_{12}^*-\lambda_{21}^*)\lambda_{ij}^*
+(\lambda_{23}^*-\lambda_{32}^*)\lambda_{ij}^*
\end{equation}
in $H^*(PV_3)$ for $1\leq i\not=j\leq 3$. From formula~(\ref{equation5.12}), we have
\begin{equation}\label{equation5.17}
\lambda_{23}^*\lambda_{32}^*\equiv\lambda_{12}^*\lambda_{21}^*\equiv0
\end{equation}
in $H^*(PV_3)$. Together with formulas~(\ref{equation5.10}) and~(\ref{equation5.11}), we have
\begin{equation}\label{equation5.18}
\begin{array}{c}
\lambda_{21}^*\lambda_{31}^*\equiv \lambda_{21}^*\lambda_{32}^*+ \lambda_{23}^*\lambda_{31}^*,\\
\lambda_{21}^*\lambda_{32}^*\equiv \lambda_{12}^*\lambda_{31}^*+ \lambda_{31}^*\lambda_{32}^*\\
\end{array}
\end{equation}
in $H^*(PV_3)$.
\begin{thm}\label{theorem5.3}
The cohomology ring $H^*(PV_3)$ is the quotient algebra of the exterior algebra
$$
E(\lambda_{12}^*,\lambda_{21}^*,\lambda_{13}^*,\lambda_{31}^*,\lambda_{23}^*,\lambda_{32}^*)
$$
subject to the following relations
\begin{enumerate}
\item[1)] $\lambda_{ij}^*\lambda_{ji}^*\equiv 0$ for $1\leq i<j\leq 3$,
\item[2)] $(\lambda_{13}^*-\lambda_{31}^*)\lambda_{ij}^*\equiv (\lambda_{12}^*-\lambda_{21}^*)\lambda_{ij}^*+(\lambda_{23}^*-\lambda_{32}^*)\lambda_{ij}^*$ for $1\leq i\not=j\leq 3$ and
\item[3)] $\lambda_{21}^*\lambda_{31}^*\equiv \lambda_{21}^*\lambda_{32}^*+ \lambda_{23}^*\lambda_{31}^*$.
\end{enumerate}
\end{thm}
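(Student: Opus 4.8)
The plan is to upgrade the partial relations \eqref{equation5.16}--\eqref{equation5.18} to a complete presentation by a rank count, using the ring isomorphism supplied by Proposition~\ref{decomp}. First I would assemble the ambient structure. The group isomorphism $\delta$ induces a graded ring isomorphism $\delta^*\colon H^*(PV_3)\xrightarrow{\cong}H^*(G_3\ast\Z)=H^*(BG_3\vee S^1)$. Since $BG_3\vee S^1$ is a wedge, its reduced cohomology is $\tilde H^*(BG_3)\oplus\tilde H^*(S^1)$ with all products of classes coming from different summands equal to zero; in particular $c_2^*$ annihilates every positive-degree class. Together with Theorem~\ref{theorem5.2}, which gives $H^i(BG_3)=0$ for $i\geq 3$ and exhibits $H^*(BG_3)$ as generated in degree $1$, this shows that $H^*(PV_3)$ is generated in degree $1$, vanishes in degrees $\geq 3$, and is free abelian of ranks $1,6,6$ in degrees $0,1,2$ (consistent with the homology ranks recalled in the introduction, $H_*(PV_3)$ being free abelian).

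Next I would confirm that relations 1)--3) hold in $H^*(PV_3)$. Relation 2) is \eqref{equation5.16}; the cases $\lambda_{12}^*\lambda_{21}^*\equiv0$ and $\lambda_{23}^*\lambda_{32}^*\equiv0$ of relation 1) are \eqref{equation5.17}; relation 3) is the first line of \eqref{equation5.18}. The only missing instance of relation 1), namely $\lambda_{13}^*\lambda_{31}^*\equiv0$, I would obtain by applying $\delta^*$: using $\delta^*(\lambda_{13}^*)=a_1^*+b_1^*+c_1^*+c_2^*$, $\delta^*(\lambda_{31}^*)=a_2^*+b_2^*+c_1^*$ and the vanishing of all products involving $c_2^*$, one expands $\delta^*(\lambda_{13}^*\lambda_{31}^*)=(a_1^*+b_1^*+c_1^*)(a_2^*+b_2^*+c_1^*)$ in $H^*(G_3)$ and reduces it to $0$ via \eqref{equation5.10}, \eqref{equation5.11} and \eqref{equation5.12}. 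Hence relations 1)--3) define a graded algebra $B$ (the stated quotient of the exterior algebra) admitting a surjective algebra homomorphism $\eta\colon B\twoheadrightarrow H^*(PV_3)$, surjectivity following from generation in degree $1$.

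Finally I would show $\eta$ is an isomorphism by computing $B$. Degrees $0$ and $1$ are immediate, with $B^1\cong\Z^{\oplus 6}$. In degree $2$, the three relations of type 1), the six relations of type 2) (one per ordered pair $ij$), and the single relation of type 3) become linear conditions on the $15$-dimensional group $E^2$; an explicit row reduction shows they span a sublattice of rank $9$, with all pivots $\pm1$, so that $B^2$ is free abelian of rank $15-9=6$. The one dependence among these conditions occurs inside the type 2) relations and reflects $u\wedge u=0$ for $u=(\lambda_{13}^*-\lambda_{31}^*)-(\lambda_{12}^*-\lambda_{21}^*)-(\lambda_{23}^*-\lambda_{32}^*)=(\delta^*)^{-1}(c_2^*)$. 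For degrees $\geq 3$ it suffices, since $B$ is generated in degree $1$, to check $B^3=0$, which is a finite reduction of each degree-$3$ monomial using relations 1)--3), in the same spirit as the proof that $A^3=0$ in Theorem~\ref{theorem5.2}. Thus $B$ and $H^*(PV_3)$ are free abelian of equal finite rank in each degree, and the surjection $\eta$ between them is therefore an isomorphism. I expect the degree-$2$ rank count together with the verification $B^3=0$ to be the main obstacle; everything else is formal once the wedge splitting and Theorem~\ref{theorem5.2} are available.
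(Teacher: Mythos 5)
Your proposal is correct, but it certifies the \emph{completeness} of relations 1)--3) by a genuinely different mechanism than the paper does. Both arguments share the same foundation: the ring isomorphism $\delta^*\colon H^*(PV_3)\to H^*(G_3\ast\Z)=H^*(BG_3\vee S^1)$, the vanishing of all products with $c_2^*$ in a wedge, Theorem~\ref{theorem5.2}, and the relations~(\ref{equation5.16}), (\ref{equation5.17}), (\ref{equation5.18}). The paper then exploits the isomorphism fully: since $\delta^*$ transports a defining set of relations of $H^*(G_3\ast\Z)$ to a defining set for $H^*(PV_3)$, the formulas~(\ref{equation5.16})--(\ref{equation5.18}) are \emph{automatically} a complete presentation, and the entire proof of Theorem~\ref{theorem5.3} consists of an ideal-equivalence manipulation: adding the two equations of~(\ref{equation5.18}) and using~(\ref{equation5.16}) with $ij=31$, the pair~(\ref{equation5.18}) may be replaced by relation 3) together with $\lambda_{13}^*\lambda_{31}^*\equiv 0$; no rank count is needed anywhere. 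You instead use the isomorphism only to check that relations 1)--3) \emph{hold} --- and your derivation of $\lambda_{13}^*\lambda_{31}^*\equiv 0$ by expanding $\delta^*(\lambda_{13}^*\lambda_{31}^*)=(a_1^*+b_1^*+c_1^*)(a_2^*+b_2^*+c_1^*)$ and reducing via~(\ref{equation5.10})--(\ref{equation5.12}) is actually cleaner and more direct than the paper's route through~(\ref{equation5.18}) --- after which you prove completeness by comparing graded ranks of the abstract quotient $B$ with those of $H^*(PV_3)$, using that a surjection of free abelian groups of equal finite rank is an isomorphism. This shifts the burden onto two finite computations that you assert but do not carry out; I confirm both are true: modulo the three monomial relations of type 1), the relations of types 2) and 3) eliminate the six monomials $\lambda_{12}^*\lambda_{13}^*$, $\lambda_{12}^*\lambda_{31}^*$, $\lambda_{12}^*\lambda_{23}^*$, $\lambda_{12}^*\lambda_{32}^*$, $\lambda_{21}^*\lambda_{13}^*$, $\lambda_{21}^*\lambda_{31}^*$ in terms of the remaining six, each elimination with pivot $\pm1$, and the unique dependence (your $u\wedge u=0$ with $u=(\delta^*)^{-1}(c_2^*)$, matching the paper's Note) makes the seventh type-2) relation redundant, so $B^2\cong\Z^{\oplus 6}$; moreover each of the eight degree-3 monomials not containing one of the pairs killed by relation 1) then reduces to zero, so $B^3=0$. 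As for what each route buys: the paper's transport argument is shorter and avoids all counting, but its correctness rests on the two-way equivalence of the ideals generated by~(\ref{equation5.16})--(\ref{equation5.18}) and by 1)--3), which the paper states quite tersely; your argument costs two routine linear-algebra verifications but decomposes into independently checkable pieces (``relations hold'' plus ``ranks agree'') and has the side benefit of cross-checking the answer against the Betti numbers of $PV_3$ from \cite{BEER}.
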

\begin{proof}
According to the above computations, the defining relations for $H^*(PV_3)$ are given by the formulas~(\ref{equation5.16}), ~(\ref{equation5.17}) and~(\ref{equation5.18}). The assertion follows from the fact by adding two equations in formula~(\ref{equation5.18}) together with using formula~(\ref{equation5.16}), formula~(\ref{equation5.18}) can be placed by
$$
\left\{
\begin{array}{c}
\lambda_{21}^*\lambda_{31}^*\equiv \lambda_{21}^*\lambda_{32}^*+ \lambda_{23}^*\lambda_{31}^*,\\
\lambda_{13}^*\lambda_{31}^*\equiv0.\\
\end{array}
\right.
$$
\end{proof}

\noindent\textbf{Note.} In the cohomology ring $H^*(G_3\ast\Z)$, the element $c_2^*$ creates five relations
$$
a_1^*c_2^*\equiv a_2^*c_2^*\equiv b_1^*c_2^*\equiv b_2^*c_2^*\equiv c_1^*c_2^*\equiv 0
$$
among the defining relations for $H^*(G_3\ast\Z)$. Thus the six equations in condition 2 are linearly dependent.

\section{The Associated Graded Lie Algebra for $PV_3$}
Let $G$ be a group and let
$$
L(G)=\bigoplus_{i=1}^\infty\gamma_i(G)/\gamma_{i+1}(G)
$$
be the associated graded Lie algebra of $G$. Consider the short
exact sequence
$$
\langle a_1,b_1,a_2,b_2\rangle \rightarrowtail G_3
\twoheadrightarrow \mathbb{Z}=\langle c_1\rangle.
$$
Since $c_1$ acts trivially on $\langle
a_1,b_1,a_2,b_2\rangle^{\mathrm{ab}}$, there is a short exact
sequence of Lie algebras
$$
L(\langle a_1,b_1,a_2,b_2\rangle)\rightarrowtail L(G_3)
\twoheadrightarrow L(\langle c_1\rangle)
$$
by~\cite{FR1}. Let $A_1,A_2,B_1,B_2,C_1$ be the elements in the Lie
algebra induced by $a_1,b_1,a_2,b_2,c_1$, respectively. Observe that
$L(\langle a_1,b_1,a_2,b_2\rangle)=L(\mathbb{Z}^2\ast \mathbb{Z}^2)$
is the quotient of the free Lie algebra
$$
L(A_1,B_1,A_2,B_2)
$$
by the relations
\begin{equation}\label{Lie-relation1}
\begin{array}{cccc}
&[A_1,B_1]&=&0\\
&[A_2,B_2]&=&0.\\
\end{array}
\end{equation}
From the defining relations of $G_3$, we have the following
identities:
\begin{equation}\label{Lie-relation2}
\begin{array}{cccc}
&[C_1,B_1]&=&[A_2,B_1]\\
&[C_1,A_1]&=&[B_2,A_1]\\
&[C_1,B_2]&=&[A_1,B_2]\\
&[C_1,A_2]&=&[B_1,A_2]\\
\end{array}
\end{equation}

\begin{thm}
The Lie algebra $L(PV_3)$ is the quotient of the free Lie algebra
$L(A_1,B_1,A_2,B_2,C_1, C_2)$ subject to the relations defined by
(~\ref{Lie-relation1}) and (~\ref{Lie-relation2}).
\end{thm}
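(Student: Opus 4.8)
The plan is to compute $L(PV_3)$ in two stages mirroring the free product decomposition $PV_3\cong G_3\ast\langle c_2\rangle=G_3\ast\Z$ of Proposition~\ref{decomp}: first determine $L(G_3)$ from the split extension $Q_3:=\langle a_1,b_1,a_2,b_2\rangle\rightarrowtail G_3\twoheadrightarrow\Z=\langle c_1\rangle$, and then pass from $L(G_3)$ to $L(PV_3)$ through the free product with $\Z$. It helps to note first that the target algebra factors to match this. Let $\mathcal{L}$ denote the quotient of $L(A_1,B_1,A_2,B_2,C_1,C_2)$ by~(\ref{Lie-relation1}) and~(\ref{Lie-relation2}), and let $\bar{\mathcal{L}}$ denote the quotient of $L(A_1,B_1,A_2,B_2,C_1)$ by the same relations. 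Since $C_2$ occurs in no relation, $\mathcal{L}$ is the coproduct $\bar{\mathcal{L}}\amalg L(C_2)$, where $L(C_2)$ is the free (hence abelian, one-dimensional) Lie ring on $C_2$. Thus it suffices to establish $L(G_3)\cong\bar{\mathcal{L}}$ and $L(PV_3)\cong L(G_3)\amalg L(\Z)$.

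For the first isomorphism I would use the short exact sequence of Lie rings $L(Q_3)\rightarrowtail L(G_3)\twoheadrightarrow L(\Z)$ furnished by~\cite{FR1}, applicable because the extension splits and $c_1$ acts trivially on $Q_3^{\mathrm{ab}}$, together with the identifications already recorded above: $L(Q_3)=L(\Z^2\ast\Z^2)$ is the quotient of $L(A_1,B_1,A_2,B_2)$ by~(\ref{Lie-relation1}), and $L(\Z)=\Z\cdot C_1$. The relations~(\ref{Lie-relation2}) hold in $L(G_3)$, being the degree-two terms of the defining relations~(\ref{eq:relcong}) of $G_3$, so there is a natural surjection $\bar{\mathcal{L}}\twoheadrightarrow L(G_3)$. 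To prove injectivity, observe that~(\ref{Lie-relation2}) exhibits each bracket $[C_1,X]$, for $X\in\{A_1,B_1,A_2,B_2\}$, inside the subalgebra $\mathcal{K}\subseteq\bar{\mathcal{L}}$ generated by $A_1,B_1,A_2,B_2$; since $[C_1,-]$ is a derivation this makes $\mathcal{K}$ an ideal with $\bar{\mathcal{L}}=\mathcal{K}+\Z\cdot C_1$. By~(\ref{Lie-relation1}) the algebra $\mathcal{K}$ is a quotient of $L(Q_3)$, and the composite $L(Q_3)\twoheadrightarrow\mathcal{K}\to L(Q_3)$ induced by $\bar{\mathcal{L}}\to L(G_3)$ is the identity on generators, hence the identity; therefore $\mathcal{K}\cong L(Q_3)$. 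Matching this with the splitting $L(G_3)=L(Q_3)+\Z\cdot C_1$ and noting that $C_1$ maps to a nonzero element of $L(\Z)$, one concludes that $\bar{\mathcal{L}}\to L(G_3)$ is an isomorphism.

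For the second isomorphism I would invoke the principle that the associated graded Lie ring functor carries free products of groups to coproducts of graded Lie rings, which gives $L(PV_3)=L(G_3\ast\Z)\cong L(G_3)\amalg L(\Z)$. Combined with the first stage and the factorization of $\mathcal{L}$ above, this yields $L(PV_3)\cong\bar{\mathcal{L}}\amalg L(C_2)=\mathcal{L}$, which is the assertion. The comparison map $L(G_3)\amalg L(\Z)\to L(PV_3)$ is evidently surjective, since $A_1,B_1,A_2,B_2,C_1,C_2$ generate $L(PV_3)$; the substance lies in its injectivity.

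The free product formula of the second stage is where I expect the real difficulty, because integrally — as opposed to after tensoring with $\Q$ — one must exclude hidden relations in $\gamma_n(G_3\ast\Z)$ that mix the two free factors. The natural route is through the augmentation ideal filtration of $\Z[PV_3]$: the group ring of a free product is the coproduct of the group rings, so its augmentation-graded algebra is the free product of the two augmentation-graded algebras, and the free product of enveloping algebras is the enveloping algebra of the coproduct of the Lie parts. Since $PV_3$ is residually torsion-free nilpotent (Theorem~\ref{resnilp}), one has $\bigcap_n\Delta^n(PV_3)=0$ and $L(PV_3)$ maps into $\mathrm{gr}\,\Z[PV_3]$, which is what allows the algebra-level coproduct to be transported back to the Lie ring. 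Verifying that the filtrations are compatible under the coproduct, and that no integral torsion obstructs the identification of the Lie (as opposed to merely rational Malcev) part, is the crux of the argument.
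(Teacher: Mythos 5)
Your first stage is, in substance, the paper's own proof. The paper constructs the same surjection from the presented Lie ring onto $L(G_3)$, observes that the subalgebra generated by $A_1,B_1,A_2,B_2$ is an ideal because of the relations~(\ref{Lie-relation2}), and compares the resulting extension with the Falk--Randell sequence $L(\langle a_1,b_1,a_2,b_2\rangle)\rightarrowtail L(G_3)\twoheadrightarrow L(\langle c_1\rangle)$ by the five lemma; your retraction argument identifying $\mathcal{K}\cong L(Q_3)$ and your ``matching of splittings'' is exactly that five-lemma step. No difference there.

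The genuine gap is in your second stage, and your proposed route to close it would fail as written. First, the unrestricted ``principle'' that $L(-)$ carries free products of groups to coproducts of Lie rings is false: for $G=\Z/2\ast\Z/2$ one computes $\gamma_n(G)/\gamma_{n+1}(G)\cong\Z/2$ for all $n\geq 2$, whereas the coproduct $L(\Z/2)\amalg L(\Z/2)$ is the mod~$2$ reduction of the free Lie ring on two generators, whose degree-three component is $(\Z/2)^{\oplus 2}$. So some hypothesis on the factors is indispensable, and any proof must use it. Second, two inferences in your group-ring sketch are unjustified integrally: (i) the identification of $\mathrm{gr}\,\Z[G]$ (augmentation filtration) with the enveloping algebra of the ``Lie part'' $L(G)$ is Quillen's theorem, valid after tensoring with $\Q$ but not over $\Z$ in general; (ii) residual torsion-free nilpotence gives $\bigcap_n\Delta^n(PV_3)=0$, but the \emph{degreewise} injectivity of the canonical map $L(G)\to\mathrm{gr}\,\Z[G]$ is a dimension-subgroup question and does not follow from the vanishing of that intersection. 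What actually makes the step true here is integral freeness: by your first stage $L(G_3)\cong L(Q_3)\oplus\Z C_1$ as graded abelian groups, and the partially commutative Lie ring $L(Q_3)$ (hence $L(G_3)$, and trivially $L(\Z)$) is degreewise a free $\Z$-module, by Lyndon-type bases. For factors satisfying this freeness, together with an integral identification $\mathrm{gr}\,\Z[G_3]\cong U(L(G_3))$ --- which must itself be proved, e.g.\ by a filtered refinement of the Falk--Randell argument --- your coproduct-of-group-rings scheme can be completed, with the Poincar\'e--Birkhoff--Witt theorem supplying the injectivity. You should know that the paper is silent on all of this: it performs the reduction to $G_3$ with the single phrase ``it suffices to show,'' so you have correctly located a step the published proof glosses over; but your proposal, by its own admission and for the concrete reasons above, does not close it either.
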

\begin{proof}
From the decomposition $PV_3\cong G_3\ast \langle c_2\rangle$, it
suffices to show that $L(G_3)$ is the quotient of
$L(A_1,B_1,A_2,B_2,C_1)$ subject to the relations defined by
(~\ref{Lie-relation1}) and (~\ref{Lie-relation2}).

Let $L$ be the quotient of the free Lie algebra
$L(A_1,B_1,A_2,B_2,C_1)$ subject to the relations defined by
(~\ref{Lie-relation1}) and (~\ref{Lie-relation2}). Then the morphism
of Lie algebras
$$
L(A_1,B_1,A_2,B_2,C_1)\twoheadrightarrow L(G_3)
$$
factors through $L$. Thus is an epimorphism of Lie algebras
$$
\phi\colon L\twoheadrightarrow L(G_3).
$$
Let $L'$ be the sub Lie algebra of $L$ generated by
$A_1,B_1,A_2,B_2$. Then $L'$ is a Lie ideal of $L$ by the defining
relations of $L$ with a short exact sequence of Lie algebras
$$
L'\rightarrowtail L\twoheadrightarrow L(C_1).
$$
The assertion follows by applying 5-Lemma to the following
commutative diagram
$$
\begin{array}{ccccc}
L'& \rightarrowtail &L& \twoheadrightarrow &L(C_1)\\
\downarrow & &\downarrow && \|\\
L(\langle a_1,b_1,a_2,b_2\rangle)&\rightarrowtail& L(G_3)&
\twoheadrightarrow& L(\langle c_1\rangle).\\
\end{array}
$$
\end{proof}

\end{document}